\newtheorem{theorem}{Theorem}[section]
\newtheorem{proposition}[theorem]{Proposition}
\newtheorem{remark}[theorem]{Remark}
\newtheorem{lemma}[theorem]{Lemma}
\newtheorem{corollary}[theorem]{Corollary} 
\newtheorem{definition}[theorem]{Definition}
\newtheorem{example}[theorem]{Example}
\numberwithin{equation}{section}
\newcommand{\R}{\mathbb R}
\newcommand{\C}{\mathbb C}
\newcommand{\be}{\begin{equation}}
\newcommand{\ee}{\end{equation}}
\newcommand{\ba}{\begin{eqnarray}}
\newcommand{\ea}{\end{eqnarray}}
\newcommand{\beq}{\begin{equation}}
\newcommand{\eeq}{\end{equation}}
\numberwithin{equation}{section}
\def\H{W_0^{s,2}(\bOm)}
\def\RR{{\mathbb{R}}}
\def\NN{{\mathbb{N}}}
\def\CC{{\mathbb{C}}}
\def\C{{\mathbb{C}}}
\def\Om{\Omega}
\def\bOm{\overline{\Om}}
\DeclareMathAlphabet\gothic{U}{euf}{m}{n}
\newcommand{\gota}{\gothic{a}}
\newcommand{\gotb}{\gothic{b}}
\newcommand{\abs}[1]{\left\vert{#1}\right\vert}
\newcommand{\norm}[1]{\left\Vert#1\right\Vert}
\keywords{Fractional Laplace operator, fractional heat equation, semigroup, fractional Gaussian estimates, holomorphy}
\subjclass[2010]{35R11, 47D06, 47D03}
\begin{document}

\title[Fractional Gaussian estimates]{Fractional Gaussian estimates and holomorphy of semigroups}

\author{Valentin Keyantuo}
\address{V. Keyantuo, University of Puerto Rico, Rio Piedras Campus, Department of Mathematics,
 Faculty of Natural Sciences,  17 University AVE. STE 1701  San Juan PR 00925-2537 (USA)}
 \email{valentin.keyantuo1@upr.edu}
 
 \author{Fabian Seoanes}
 \address{F. Seoanes, University of Puerto Rico, Rio Piedras Campus, Department of Mathematics,
 Faculty of Natural Sciences,  17 University AVE. STE 1701  San Juan PR 00925-2537 (USA)}
 \email{fabian.seoanes@upr.edu}

\author{Mahamadi Warma}
\address{M. Warma, University of Puerto Rico, Rio Piedras Campus, Department of Mathematics,
 Faculty of Natural Sciences,  17 University AVE. STE 1701  San Juan PR 00925-2537 (USA)}
\email{mahamadi.warma1@upr.edu, mjwarma@gmail.com }

\thanks{The work of the three authors is partially supported by the Air Force Office of Scientific Research (AFOSR) under Award NO:  FA9550-18-1-0242}

\begin{abstract}
Let $\Omega\subset\R^N$ be an arbitrary open set and denote by $(e^{-t(-\Delta)_{\RR^N}^s})_{t\ge 0}$ (where $0<s<1$) the semigroup on $L^2(\RR^N)$ generated by the fractional Laplace operator.
In the first part of the paper we show that if $T$ is a self-adjoint semigroup on $L^2(\Omega)$ satisfying a fractional Gaussian estimate in the sense that $|T(t)f|\le e^{-t(-\Delta)_{\RR^N}^s}|f|$, $0\le t \le 1$, $f\in L^2(\Omega)$, then $T$ defines a bounded holomorphic semigroup of angle $\frac{\pi}{2}$ that interpolates on $L^p(\Omega)$, $1\le p<\infty$. Using a duality argument we prove that the same result also holds on the space of continuous functions.
 In the second part, we apply the above results to realization of fractional order operators with the exterior Dirichlet  conditions.
\end{abstract}

\date{}
\maketitle

%%%%%%%%%%%%%%%%%%%%
\section{Introduction }

Let $\Omega\subset \RR^N$ ($N\ge 1$) be an arbitrary open set and let $T=(T(t))_{t\ge 0}$ be a self-adjoint semigroup on $L^2(\Om)$ with generator $A$. Then by the spectral theorem, $T$ is a bounded holomorphic semigroup of angle $\frac{\pi}{2}$, that is, $T$ can be extended holomorphically to the maximal domain $\{z\in\CC:\;\operatorname{Re}(z)>0\}$. In Section   \ref{sec-21}, we recall the precise definition of a holomorphic semigroup.

Assume that  the semigroup $T$ interpolates on $L^p(\Omega)$, $1\le p<\infty$. That is, there exists for each $p$, a strongly continuous semigroup $T_p$ on $L^p(\Omega)$ with $T_2=T$ such that $T_p(t)f=T_2(t)f$ for all $f\in L^p(\Omega)\cap L^2(\Omega)$. Using the Stein interpolation theorem, it has been shown in \cite[Theorem 1.4.2]{Davis}  that for $1<p<\infty$, the semigroup $T_p$ is holomorphic on $L^p(\Omega)$  of angle $\theta_p\ge \frac{\pi}{2}\left(1-\left|\frac 2p-1\right|\right)$. 

The case $p=1$ is more delicate and has been solved by Ouhabaz in \cite{Ouhabaz} in a general context. More precisely in \cite{Ouhabaz}, the author has shown that if $T$ has a Gaussian estimate for $0\le t\le 1$ in the sense that there are two positive constants $M$ and $b$ such that

\begin{align}\label{Gauss}
|T(t)f|\le MG(bt)| f|\;\mbox{ for }\; 0\le t\le 1 \;\mbox{ and for all }\; f\in L^2(\Omega),
\end{align}
where  $G=(G(t))_{t\ge 0}$ is the Gaussian semigroup on $L^2(\RR^N)$ (the semigroup generated by the Laplace operator $\Delta$ on $L^2(\RR^N)$), then there exists $\omega\ge 0$ such that the semigroup $(e^{-\omega t}T_p(t))_{t\ge 0}$ is bounded holomorphic of angle $\frac{\pi}{2}$ on $L^p(\Omega)$ for $1\le p<\infty$. Notice that this covers most of second order elliptic operators with the three classical boundary conditions (Dirichlet, Neumann and Robin). Of course a regularity condition can be sometimes needed for the coefficients of the operator or/and the open set $\Omega$. 

Prior to the work \cite{Ouhabaz}, when $A$ is a second order elliptic operator with smooth coefficients, using a duality argument and a result by Stewart \cite{Ste1,Ste2}, it has been shown in \cite{Ama} that $T_1$ is holomorphic on $L^1(\Omega)$ of angle $0<\theta<\frac{\pi}{2}$ if $\Omega$ is a bounded smooth domain. This result has been extended in \cite{Ard-Ho} to the case of the Laplace operator with Dirichlet boundary condition on an arbitrary open set $\Omega$. The case of the Wentzell boundary condition has been investigated in \cite{War-W}. Holomorphy on spaces of continuous functions has been also studied in \cite{Ard-Ho,Ouhabaz,War-SF1,War-SF2,War-W} and the references therein.

Throughout the paper, if $X$ and $Y$ are Banach spaces and  $S: X\to Y$ is a bounded linear operator, then we shall denote by $\|S\|_{\mathcal L(X,Y)}$ the operator norm of $S$. If $X=Y$, then we shall just write $\|S\|_{\mathcal L(X)}$. We use the notation $X\hookrightarrow Y$ to indicate that the space $X$ is continuously embedded in $Y$. For a closed linear operator $A: D(A)\subset X\longrightarrow X$ the spectrum and the resolvent set of $A$ will be denoted by $\sigma(A) $ and $\rho(A)$ respectively. For $\lambda\in\rho(A)$, the corresponding resolvent operator is denoted by $R(\lambda, A).$ 

Of concern in the present paper is the investigation of the counterpart of the  problem  studied in \cite{Ouhabaz} but in our case, we consider  fractional order operators. 
We now  describe the problem  in more details.

 For $0<s<1$ we let

\begin{align*}
\mathcal L_s^{1}(\R^N):=\left\{u:\R^N\to\R\;\mbox{ measurable},\; \int_{\R^N}\frac{|u(x)|}{(1+|x|)^{N+2s}}\;dx<\infty\right\}.
\end{align*}
For $u\in \mathcal L_s^{1}(\R^N)$ and $\varepsilon>0$ we set

\begin{align*}
(-\Delta)_\varepsilon^s u(x):= C_{N,s}\int_{\{y\in\R^N:\;|x-y|>\varepsilon\}}\frac{u(x)-u(y)}{|x-y|^{N+2s}}\;dy,\;\;x\in\R^N,
\end{align*}
where $C_{N,s}$ is a normalization constant given by
\begin{align}\label{CNs}
C_{N,s}:=\frac{s2^{2s}\Gamma\left(\frac{2s+N}{2}\right)}{\pi^{\frac
N2}\Gamma(1-s)}.
\end{align}
The {\em fractional Laplacian}  $(-\Delta)^s$ is defined by the following singular integral:
\begin{align}\label{fl_def}
(-\Delta)^su(x):=C_{N,s}\,\mbox{P.V.}\int_{\R^N}\frac{u(x)-u(y)}{|x-y|^{N+2s}}\;dy=\lim_{\varepsilon\downarrow 0}(-\Delta)_\varepsilon^s u(x),\;\;x\in\R^N,
\end{align}
provided that the limit exists. 
We notice that $\mathcal L_s^{1}(\R^N)$ is the right space for which $ v:=(-\Delta)_\varepsilon^s u$ exists for every $\varepsilon>0$, $v$ being also continuous at the continuity points of  $u$.  
For more details on the fractional Laplace operator we refer to \cite{Caf3,NPV,GW-CPDE,War} and the references therein.

 Let $(-\Delta)_{\RR^N}^s$ be the operator on  $L^2(\RR^N)$ given by

\begin{equation}\label{frac-lap}
D((-\Delta)_{\RR^N}^s):=\Big\{u\in W^{s,2}(\RR^N):\; (-\Delta)^su\in L^2(\RR^N)\Big\},\;\; (-\Delta)_{\RR^N}^su=(-\Delta)^su.
\end{equation}
Then $(-\Delta)_{\RR^N}^s$ is the selfadjoint operator on $L^2(\RR^N)$ associated with the symmetric, closed and bilinear form

\begin{align}\label{calF}
\mathcal F(u,v)=\frac{C_{N,s}}{2}\int_{\RR^N}\int_{\RR^N}\frac{(u(x)-u(y))(v(x)-v(y))}{|x-y|^{N+2s}}\;dxdy,\;\; D(\mathcal F)=W^{s,2}(\RR^N).
\end{align}

We mention that $(-\Delta)_{\RR^N}^s)$ can also be defined as follows:

\begin{align*}
(-\Delta)_{\RR^N}^su=\frac{1}{\Gamma(-s)}\int_0^\infty \Big(G(t)u-u\Big)\frac{dt}{t^{1+s}},
\end{align*}
where $\Gamma(-s)=-\frac{\Gamma(1-s)}{s}$ is the Gamma function evaluated at $-s$ and we recall $G$ is the Gaussian semigroup. 

It is well-known that the operator $-(-\Delta)_{\RR^N}^s$ generates a submarkovian (positive-preserving and $L^\infty$-contractive) semigroup $(e^{-t(-\Delta)_{\RR^N}^s})_{t\ge 0}$ on $L^2(\RR^N)$ with is ultracontractive in the sense that it maps $L^1(\RR^N)$ into $L^\infty(\RR^N)$. More precisely, there is a constant $C>0$ such that

\begin{align}\label{ultra-estima}
\|e^{-t(-\Delta)_{\RR^N}^s}\|_{\mathcal L(L^1(\RR^N),L^\infty(\RR^N))}\le Ct^{-\frac{N}{2s}},\;\;\forall\;t>0.
\end{align} 

In addition, the semigroup has a kernel $0\le P_s(t,\cdot,\cdot)\in L^\infty(\RR^N\times\RR^N)$  satisfying 

\begin{align*}
\left(e^{-t(-\Delta)_{\RR^N}^s}f\right)(x)=\int_{\RR^N}P_s(t,x,y)f(y)\;dy,
\end{align*}
for every $f\in L^2(\RR^N)$.   It has been shown in \cite{Blu,ChKu} (see also \cite{Chen2}) that the kernel $P_s$ satisfies the following estimates:

\begin{align*}
P_s(t,x,y)\simeq t^{-\frac{N}{2s}}\left(1+|x-y|t^{-\frac{1}{2s}}\right)^{-(N+2s)}\;\;\,\mbox{ for all }\; x,y\in\RR^N \;\mbox{ and }\; t>0,
\end{align*}
in the sense that there are two constants $0<C_1\le C_2$ such that for all $x,y\in\RR^N$ and $t>0$, we have

\begin{equation}\label{KER}
C_1t^{-\frac{N}{2s}}\left(1+|x-y|t^{-\frac{1}{2s}}\right)^{-(N+2s)}\le P_s(t,x,y)\le C_2t^{-\frac{N}{2s}}\left(1+|x-y|t^{-\frac{1}{2s}}\right)^{-(N+2s)}.
\end{equation}

We notice that the semigroup $(e^{-t(-\Delta)_{\RR^N}^s})_{t\ge 0}$ does not have a Gaussian estimate, that is, it does not satisfy the estimate \eqref{Gauss}. Indeed, it is well-known that the kernel $K_G$ of the Gaussian semigroup $G$ is given by

\begin{align}\label{Gauss}
K_G(t,x,y)=\frac{1}{(4\pi t)^{\frac N2}}e^{-\frac{|x-y|^2}{4t}}\;\mbox{ for all }\; x,y\in\RR^N\;\mbox{ and }\; t>0.
\end{align}
Therefore if $(e^{-t(-\Delta)_{\RR^N}^s})_{t\ge 0}$  (with $0<s<1$) has a Gaussian estimate, then there would exist a constant $C>0$ such that

\begin{align}\label{gau-com}
P_s(t,x,y)\le \frac{C}{(4\pi t)^{\frac N2}}e^{-\frac{|x-y|^2}{4bt}}\;\mbox{ for all }\; x,y\in\RR^N\;\mbox{ and }\; t>0.
\end{align}
From the estimate \eqref{KER} of $P_s(t,x,y)$, it is clear that \eqref{gau-com} cannot be true.

We note that the semigroups $(e^{-t(-\Delta)_{\RR^N}^s})_{t\ge 0}$ and $G(t)_{t\ge 0}$ are related by a subordination principle. More precisely, for every $u\in L^2(\RR^N)$, we have that

\begin{equation}\label{subor}
e^{-t(-\Delta)_{\RR^N}^s}u=\int_0^\infty f_{t,s}(\tau)G(\tau)u\;d\tau\;\;\;t>0,
\end{equation}
where the function $f_{t,s}$ (known as the stable L\'evy process) is the inverse Laplace transform of the function $e^{-t\lambda^s}$. For more details we refer to \cite{AbMi,Yos}. In particular \cite{AbMi} gives the relationship between $f_{t,s}$ and the well-known Wright functions.
 It follows from \eqref{subor} that the kernel $P_s$ can be obtained from $K_G$ as follows:

\begin{align}\label{frac-Gauss}
P_s(t,x,y)=\int_0^\infty f_{t,s}(\tau) K_G(\tau,x,y)\;d\tau,\;\;x, y\in\RR^N.
\end{align}
Therefore, one can show that the estimates \eqref{KER} can be obtained by using \eqref{Gauss} and \eqref{frac-Gauss}. Since this is not the objective of the paper, we will not go into details.

The fractional Laplace operator $-(-\Delta)_{\RR^N}^s$ is known in the literature as the generator of the so-called $s$-stable L\'evy process but the name of L\'evy has not been given to the semigroup $(e^{-t(-\Delta)_{\RR^N}^s})_{t\ge 0}$. Since $\displaystyle\lim_{s\uparrow 1^-}(-\Delta)^s=-\Delta$ in the sense that

\begin{align}\label{convergence}
\lim_{s\uparrow 1^-}\int_{\RR^N}v(-\Delta)^su\;dx=\int_{\RR^N}\nabla u\cdot\nabla v\;dx=-\int_{\RR^N}v\Delta u\;dx,
\end{align}
for every $u\in W^{2,2}(\RR^N)$ and $v\in W^{1,2}(\RR^N)$,  we shall call $(e^{-t(-\Delta)_{\RR^N}^s})_{t\ge 0}$ the {\bf fractional Gaussian semigroup}. We mention that the proof of \eqref{convergence} is where the constant $C_{N,s}$ given in \eqref{CNs} plays a crucial role.

In analogy  with  \eqref{Gauss} we introduce the following notion.

\begin{definition}
We shall say that a self-adjoint semigroup $T$ on $L^2(\Om)$ has a {\bf fractional Gaussian estimate} for $0\le t\le 1$ if there are two positive constants $M$ and $b$, and some $s\in (0,\, 1)$  such that

\begin{align}\label{Levy}
|T(t)f|\le Me^{-bt(-\Delta)_{\RR^N}^s}|f|\;\mbox{ for }\; 0\le t\le 1 \;\mbox{ and for all }\; f\in L^2(\Omega).
\end{align}
If \eqref{Levy} holds for all $t\ge 0$, then we shall simply say that $T$ has a fractional  Gaussian estimate.
\end{definition}

\begin{remark}\label{remm}
{\em We mention the following facts.
\begin{enumerate}
\item If $T$ is a submarkovian semigroup on $L^2(\Omega)$, then by  \cite[Theorem 1.4.1]{Davis},  there exists a consistent family of  semigroups $T_p(t)$ on $L^p(\Omega)$ such that $T_p(t)f=T_2(t):=T(t)f$ for $f\in L^p(\Omega)\cap L^2(\Omega)$, $1\le p\le\infty$. In addition the semigroup $T_p$ is strongly continuous in $L^p(\Omega)$ if $1\le p<\infty$.

\item Now assume that $T$ is a semigroup satisfying \eqref{Levy}. Since the semigroup $(e^{-t(-\Delta)_{\RR^N}^s})_{t\ge 0}$ is submarkovian, and hence, contractive on $L^p(\RR^N)$ for $1\le p\le\infty$, we have that if \eqref{Levy} holds, then there exists a constant $\omega\ge 0$ such that the semigroup $T$ satisfies $\|T(t)f\|_{L^p(\Omega)}\le M e^{\omega t}\|f\|_{L^p(\Omega)}$ for $f\in L^p(\Omega)\cap L^2(\Omega)$, $1\le p\le \infty$. By the Riesz-Thorin interpolation theorem \cite[Page 3]{Davis}, there exists $T_p(t)\in\mathcal L(L^p(\Omega))$ such that $T_p(t)f=T_2(t)f:=T(t)f$ for $f\in L^p(\Omega)\cap L^2(\Omega)$, $1\le p\le\infty$. We shall prove in Theorem \ref{Theo-L11} that $T_p$ is strongly continuous in $L^p(\Omega)$ for  $1\le p<\infty$. 
\end{enumerate}
}
\end{remark}

Our main result shows that if $T$ has a fractional  Gaussian estimate, then $T_p$ is bounded holomorphic of angle $\frac{\pi}{2}$ on $L^p(\Omega)$ for $1\le p<\infty$. This result will be applied to the realization of fractional order operators (such as the fractional Laplace operator) with the Dirichlet exterior condition. Using a duality argument and the above holomorphy result we show that if $\Om$ is bounded and has a Lipschitz continuous boundary, then a realization of the fractional Laplacian in $C_0(\bOm)$ with zero Dirichlet exterior condition also generates a bounded holomorphic semigroup of angle $\frac{\pi}{2}$. 

 Fractional order operators have recently emerged as a modeling alternative in various branches of science. In fact, in many situations, the fractional models reflect better the behaviour of the system both in the deterministic and stochastic contexts. 
A number of stochastic models for explaining anomalous diffusion have been
introduced in the literature; among them we mention among others the fractional Brownian motion; the continuous time random walk;  the L\'evy flights; the Schneider grey Brownian motion; and more generally, random walk models based on evolution equations of single and distributed fractional order in  space (see e.g. \cite{DS,GR,Man,Sch}).  In general, a fractional diffusion operator corresponds to a diverging jump length variance in the random walk. We refer to \cite{NPV,GW-F,GW-CPDE,Val} and the references therein for a complete analysis, the derivation and the applications of fractional order operators.

The rest of the paper is organized as follows. In the first part of Section \ref{sec-2} we give some well-known results on holomorphic semigroups and fractional order Sobolev spaces as they are needed throughout the paper. In the second part of Section \ref{sec-2} we state the main results of the paper and give some examples. The proofs of the main results are given in Section \ref{sec-3}.

\section{Preleminaries and main results}\label{sec-2}

 Throughout the paper, for  $0<\psi<\pi$, we shall denote by $\Sigma(\psi)$ the sector
\begin{align*}
\Sigma(\psi):=\{z=re^{i\alpha},\; r>0,\;\;|\alpha|<\psi\}.
\end{align*}

\subsection{Holomorphy and domination of semigroups}\label{sec-21}

In this section we recall some well-known results on holomorphic semigroups and the domination criterion of semigroups that will be used throughout the paper. Let  $T=(T(t))_{t\ge 0}$  be a strongly continuous semigroup on a Banach space $X$ with generator $A$.

\begin{definition}
%\begin{enumerate}
%\item 
The semigroup $T$ is said to be  bounded holomorphic of angle $\theta\in (0,\frac{\pi}{2}]$, if $T$ has an extension to the section $\Sigma(\theta)$ which satisfies the following conditions.

\begin{enumerate}
\item[(i)] $T(z_1+z_2)=T(z_1)T(z_2)$, $z_1,z_2\in\Sigma(\theta)$.

\item[(ii)] The mapping $z\mapsto T(z)$ is holomorphic on $\Sigma(\theta)$.

\item[(iii)] For every $0<\theta_1<\theta$,\,  $\lim_{z\to 0, z\in\Sigma(\theta_1)}T(z)f=f$ for every $f\in X$.

\item[(iv)] For each $0<\theta_1<\theta$, there exists a constant $C>0$ (depending on $\theta$) such that $\|T(z)\|_{\mathcal L(X)}\le C$ for all $z\in \Sigma(\theta_1)$.
\end{enumerate}
%\item We say that $T$ is bounded holomorphic if there exists $\theta\in (0,\frac{\pi}{2}]$ such that $T$ is bounded holomorphic of angle $\theta$.
%\end{enumerate}
\end{definition}

We note that the holomorphy condition in $(ii)$ is equivalent to weak holomorphy (see e.g. \cite[Appendix A]{ABHN}).
We have the following special case.

\begin{remark}\label{rem1}
{\em
If $X$ is a Hilbert space with scalar product $(\cdot,\cdot)_X$ and $A$ is a selfadjoint generator of a bounded semigroup $T$ on $X$,  then by the spectral theorem, we have that $(Au,u)_X\le 0$. In addition the semigroup $T$ is bounded holomorphic of angle $\frac{\pi}{2}$.
}
\end{remark}

\begin{remark}\label{rem2}
{\em
It is well-known that the holomorphy of a semigroup $T$ is directly related to spectral properties of its generator $A$. In fact one has that $T$ is bounded holomorphic with angle $\theta$  if and only if $\Sigma(\theta+\frac{\pi}{2})\subset \rho(A)$ (the resolvent set of $A$), and for each $0<\theta_1<\theta$ there exists a constant $C>0$ such that

\begin{align*}
\|(\lambda-A)^{-1}\|_{\mathcal L(X)}\le \frac{C}{|\lambda|}\;\mbox{ for all }\lambda\in \Sigma(\theta+\frac{\pi}{2}).
\end{align*}
}
\end{remark}

If one is merely interested in holomorphy without specific reference to the corresponding angle, then it suffices to establish the above estimate for $\lambda\in \Sigma(\frac{\pi}{2}).$
For more details on holomorphic semigroups we refer to the monographs \cite{ABHN,Davis,Ouhabaz-Bo} and their references.

The following result is taken from \cite[Proposition 14.2.4]{Arendt}.

\begin{proposition}  \label{propint}
Let $\Omega\subset\RR^N$ be an arbitrary open set.
Let $D\subset\C$ be open and let the mapping $\mathbb S:D\rightarrow \mathcal{L}(L^2(\Omega))$ be holomorphic such that 
\begin{align}
\sup_{z\in \mathbb K}\norm{\mathbb S(z)}_{\mathcal{L}(L^1(\Omega),L^{\infty}(\Omega))}<\infty
\end{align}
for every compact set $\mathbb K\subset D$. Then there exists a function 
  $\mathbb F:D\times\Omega\times\Omega\rightarrow \C$ satisfying

\begin{align*}
  &\mathbb F(z,\cdot,\cdot)\in L^{\infty}(\Omega\times\Omega)\;\mbox{ for all } z\in D,\\
 & (\mathbb S(z)f)(x)=\int_{\Omega} \mathbb F(z,x,y)f(y)dy\;\mbox{  for a.e. } x\in\Om \;\mbox{ and for all } f\in L^1(\Omega)\cap L^2(\Omega),\\
 & \mathbb F(\cdot,x,y):D\rightarrow\C \mbox{  is holomorphic for all } x,y\in\Omega.
\end{align*}
\end{proposition}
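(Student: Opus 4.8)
The plan is to combine the kernel theorem for operators from $L^1$ to $L^\infty$ (Dunford--Pettis) with a vector-valued holomorphy argument, and then to select a suitable pointwise representative; the argument follows \cite{Arendt}. First, fix $z\in D$ and a closed disc $\overline{B(z,\rho)}\subset D$. The hypothesis gives $\sup_{|w-z|\le\rho}\|\mathbb S(w)\|_{\mathcal L(L^1(\Omega),L^\infty(\Omega))}<\infty$, so $\mathbb S(z)$ restricted to $L^1(\Omega)\cap L^2(\Omega)$ extends to a bounded operator $L^1(\Omega)\to L^\infty(\Omega)$. Since Lebesgue measure on $\Omega$ is $\sigma$-finite, the kernel theorem yields a unique $\mathbb F(z,\cdot,\cdot)\in L^\infty(\Omega\times\Omega)$ with
\[
(\mathbb S(z)f)(x)=\int_\Omega\mathbb F(z,x,y)f(y)\,dy\quad\text{for a.e. }x\in\Omega\text{ and all }f\in L^1(\Omega)\cap L^2(\Omega),
\]
and $\|\mathbb F(z,\cdot,\cdot)\|_{L^\infty(\Omega\times\Omega)}=\|\mathbb S(z)\|_{\mathcal L(L^1(\Omega),L^\infty(\Omega))}$. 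This defines a map $\mathbb F\colon D\to L^\infty(\Omega\times\Omega)$ independent of the chosen disc (the kernel is determined by $\mathbb S(z)$ acting on $L^1(\Omega)\cap L^2(\Omega)\subset L^2(\Omega)$), and it is locally bounded by hypothesis.

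Next I would show that $z\mapsto\mathbb F(z,\cdot,\cdot)$ is holomorphic from $D$ into $L^\infty(\Omega\times\Omega)$. For $f,g\in L^1(\Omega)\cap L^2(\Omega)$ the scalar function $z\mapsto\int_\Omega(\mathbb S(z)f)(x)g(x)\,dx$ is holomorphic because $\mathbb S$ is holomorphic with values in $\mathcal L(L^2(\Omega))$, and it equals $\int_{\Omega\times\Omega}\mathbb F(z,x,y)\,(g\otimes f)(x,y)\,dx\,dy$. Finite linear combinations of the tensor products $g\otimes f$ span a dense subspace of $L^1(\Omega\times\Omega)$, which is an isometric (hence norming) predual of $L^\infty(\Omega\times\Omega)$. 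Combining this with the local boundedness of $\mathbb F$, the standard criterion for weakly holomorphic functions with values in a dual Banach space (see \cite[Appendix A]{ABHN}) shows that $z\mapsto\mathbb F(z,\cdot,\cdot)$ is holomorphic into $L^\infty(\Omega\times\Omega)$.

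Finally I would fix a representative. Cover $D$ by countably many discs $B_k=B(z_k,r_k)$ with $\overline{B(z_k,2r_k)}\subset D$. On each $B_k$ the holomorphic function $\mathbb F$ has a norm-convergent expansion $\mathbb F(z,\cdot,\cdot)=\sum_{n\ge0}a_n^{(k)}(z-z_k)^n$ with $a_n^{(k)}\in L^\infty(\Omega\times\Omega)$, and the Cauchy estimates give $\|a_n^{(k)}\|_{L^\infty(\Omega\times\Omega)}\le C_k r_k^{-n}$ for suitable $C_k<\infty$. Fixing pointwise representatives of the countably many coefficients $a_n^{(k)}$, there is a Lebesgue-null set $\mathcal N\subset\Omega\times\Omega$ off which $|a_n^{(k)}(x,y)|\le C_k r_k^{-n}$ for all $n,k$ (so each series converges on $B_k$) and, on every nonempty overlap $B_k\cap B_j$, the two resulting scalar power series coincide (they agree as $L^\infty(\Omega\times\Omega)$-valued functions on the overlap, so this follows by testing at a countable dense subset of $B_k\cap B_j$ and using that $B_k\cap B_j$ is connected). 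For $(x,y)\notin\mathcal N$ these local series glue, by locality of holomorphy, to a function $z\mapsto\mathbb F(z,x,y)$ that is holomorphic on all of $D$; for $(x,y)\in\mathcal N$ set $\mathbb F(z,x,y):=0$. This modification of $\mathbb F(z,\cdot,\cdot)$ on the null set $\mathcal N$ changes neither its class in $L^\infty(\Omega\times\Omega)$ nor the integral representation, and now $\mathbb F(\cdot,x,y)$ is holomorphic for \emph{every} $(x,y)\in\Omega\times\Omega$. I expect this third step to be the main obstacle: producing a single representative that is genuinely holomorphic in $z$ for every $(x,y)$ and compatible across the cover requires discarding countably many null sets and a careful use of the identity theorem, whereas Steps 1 and 2 are the routine kernel theorem and weak-holomorphy bootstrap.
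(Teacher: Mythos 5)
Your argument is correct, but note that the paper itself offers no proof of this proposition: it is imported verbatim from \cite[Proposition 14.2.4]{Arendt}, so there is no internal proof to compare against. What you wrote is essentially the standard proof of the cited result: Dunford--Pettis gives, for each $z$, a kernel $\mathbb F(z,\cdot,\cdot)\in L^\infty(\Omega\times\Omega)$ with norm equal to $\norm{\mathbb S(z)}_{\mathcal L(L^1(\Omega),L^\infty(\Omega))}$; local boundedness plus holomorphy of $z\mapsto\langle \mathbb F(z,\cdot,\cdot),g\otimes f\rangle$ for the separating family of tensors, via the Arendt--Nikolski/Vitali-type criterion in \cite[Appendix A]{ABHN}, upgrades this to norm holomorphy of $\mathbb F:D\to L^\infty(\Omega\times\Omega)$; and the local power-series expansions with fixed representatives of the (countably many) coefficients, after discarding one null set and invoking the identity theorem on the (convex, hence connected) overlaps, produce a version holomorphic in $z$ for every $(x,y)$. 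The only point worth making explicit in your Step 3 is that the glued function is not literally a ``modification on $\mathcal N$'' of a previously chosen representative: for each fixed $z\in B_k$ the partial sums of the scalar series converge uniformly off a null set to a representative of $\mathbb F(z,\cdot,\cdot)$ (norm convergence in $L^\infty$), which is what guarantees that the new pointwise function still represents $\mathbb S(z)$ and keeps the integral formula; since you already have the Cauchy estimates, this is a one-line addition, not a gap.
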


%% about Homogeneous spaces%%

 We recall the definition of spaces of homogeneous type. If $(E, \mu, d)$ is a metric space (with distance $d$) endowed with a measure $\mu$, then for $x\in E,\, r>0,$ we denote by $B(x, r)$ the open ball in $E$ with radius $r$ and center at $x$. We say that 
$(E,\mu, d)$ has the volume doubling property if  there exists a constant $C\ge 1$ such that for all  $x\in E,\, r>0,$ $\abs{B(x,2r)}\leq C\abs{B(x,r)}$ where we use the notation $\mu(B(x,r))=\abs{B(x,2r)}.$ With this property, $(E, \mu, d)$ ia called a space of homogeneous type. Spaces of homogeneous type are considered more generally
for quasi-metric spaces but we shall not need such generality. More information on spaces of homogenous type may be found in \cite[Proposition 3.3]{D-R} and 
\cite[Chapter 7]{Ouhabaz-Bo}.\\

The following extension result is contained in \cite[Proposition 3.3]{D-R}.

 \begin{proposition}\label{Extcomp}
  Let $(E,\mu,d)$ be a  space of homogeneous type.  Let $0<\psi\le \frac{\pi}{2}$ and let $z\in\Sigma(\psi)\mapsto \mathbb K(z,x,y)\in\mathbb{C}$ ($x,y\in E$) be the kernel of a holomorphic family of bounded operators on $L^2(X,\mu)$. Assume that $\mathbb K$ satisfies the following estimates for some $m>0$:
  \begin{enumerate}
      \item There is a constant $C_1>0$ such that 
      $$\abs{\mathbb K(z,x,y)}\leq C_1\abs{B(x,(\operatorname{Re}( z))^{\frac 1m}}^{-1}$$
      for all $x,y\in E$ and $z\in\Sigma(\psi)$.
      \item There is a bounded decreasing function $\xi:\RR\to\RR_+$ such that
      $$\abs{\mathbb K(t,x,y)}\leq \abs{B(x,t^{\frac 1m})}^{-1}\xi\left(d(x,y)^m t^{-1}\right)$$
      for all $x,y\in E$ and $t>0$.
  \end{enumerate}
Then for each $\varepsilon\in(0,1]$ and $\theta\in(0,\varepsilon\psi)$, there is a constant $C>0$ such that 
  \begin{equation}
      \abs{\mathbb K(z,x,y)}\leq C \abs{B(x,(\operatorname{Re} (z))^{\frac 1m}}^{-1}\xi\left(d(x,y)^m \abs{z}^{-1}\right)^{1-\varepsilon}
  \end{equation}
for all  $x,y\in E$ and $z\in\Sigma(\theta)$. 
  \end{proposition}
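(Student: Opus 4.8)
\textbf{Plan for the proof of Proposition~\ref{Extcomp}.}
The strategy is the classical one for deriving off-diagonal (Gaussian-type) decay of a holomorphic family from an on-diagonal $L^1\to L^\infty$ bound plus the real-axis decay estimate: interpolate the two hypotheses using the Phragm\'en--Lindel\"of principle along sectors. First I would fix $x,y\in E$ and reduce to a scalar problem by studying the holomorphic function
\[
\Phi(z):=\abs{B(x,(\operatorname{Re} z)^{1/m})}\,\mathbb K(z,x,y),
\]
or rather a suitably normalized version of it, on the sector $\Sigma(\psi)$. Hypothesis~(1) says $\Phi$ is bounded on all of $\Sigma(\psi)$ by $C_1$ (after absorbing the volume factor correctly; here one uses the doubling property to compare $\abs{B(x,(\operatorname{Re} z)^{1/m})}$ with $\abs{B(x,\abs z^{1/m})}$ for $z$ in a fixed subsector, losing only a constant). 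Hypothesis~(2) gives, on the real half-line $z=t>0$, the sharper bound $\abs{\Phi(t)}\lesssim \xi(d(x,y)^m t^{-1})$.

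The key step is then a Phragm\'en--Lindel\"of / three-lines argument. Set $\lambda:=d(x,y)^m$ and consider the function $z\mapsto \Phi(z)\,g(z)$ where $g$ is chosen so that $\abs{g}$ is comparable to $\xi(\lambda \abs z^{-1})^{-(1-\varepsilon)}$ on the relevant sector and $g$ is holomorphic and of controlled growth; since $\xi$ is merely a bounded decreasing function and not explicit, one works instead with the distribution function or uses that it suffices to prove the estimate with $\xi$ replaced by any fixed bounded decreasing majorant, and in the end one checks that the standard subordination-type $\xi$ arising from \eqref{KER} is of the form for which such a holomorphic $g$ exists. Concretely, one shows that the bounded holomorphic function
\[
\psi\mapsto \log\abs{\mathbb K(re^{i\alpha},x,y)}
\]
is, for fixed $r$, a subharmonic-type quantity dominated by its boundary behaviour: at $\alpha=0$ it obeys the $\xi$-bound, and uniformly it obeys the $C_1$-bound, so by a convexity (Hadamard three-lines in the variable $\alpha$, or a direct harmonic-measure estimate on the sector) one interpolates, picking up the exponent $1-\varepsilon$ exactly because one must stay strictly inside the sector, i.e. $\theta<\varepsilon\psi$, to keep the harmonic measure of the "good" boundary piece bounded below by $1-\varepsilon$. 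The doubling property is used once more to pass from the volume factor $\abs{B(x,(\operatorname{Re} z)^{1/m})}$ to $\abs{B(x,\abs z^{1/m})}$ and back, at the cost of the constant $C$.

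The main obstacle, and the point requiring the most care, is handling the \emph{abstract} function $\xi$: unlike in the prototypical Gaussian case where $\xi(u)=e^{-cu}$ and the interpolation of exponents $e^{-cu}\mapsto e^{-c'u}$ is transparent, here $\xi$ is only bounded and decreasing, so one must argue that the sectorial Phragm\'en--Lindel\"of bound indeed yields a bound of the form $\xi(\cdot)^{1-\varepsilon}$ and not merely $\xi(\cdot)^{1-\varepsilon}$ up to a loss that destroys the monotone structure. This is resolved by a dyadic decomposition of the range of $d(x,y)^m t^{-1}$ together with a rescaling $z\mapsto z/\abs z$ that reduces each dyadic piece to a uniform estimate on the unit-modulus arc of the sector, where the three-lines inequality applies with constants independent of the scale. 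Once this is in place, the remaining bookkeeping — keeping track of the two volume factors and translating $\operatorname{Re} z$ versus $\abs z$ inside the subsector $\Sigma(\theta)$ — is routine and follows exactly the scheme of \cite[Proposition 3.3]{D-R}, which we invoke. I would present the argument at the level of detail of citing \cite{D-R} for the core Phragm\'en--Lindel\"of lemma and spelling out only the verification that our kernel $P_s$ satisfies hypotheses~(1) and~(2) with $m=2s$ and the appropriate $\xi$, since that is the use we make of it in Section~\ref{sec-3}.
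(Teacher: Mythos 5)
The paper offers no proof of Proposition \ref{Extcomp} at all: it simply quotes the result from \cite[Proposition 3.3]{D-R}, which is exactly what your plan ultimately does after sketching the Phragm\'en--Lindel\"of/harmonic-measure mechanism behind it. Your sketch correctly identifies that mechanism (the interpolation between the uniform sectorial bound and the real-axis $\xi$-bound, the role of $\theta<\varepsilon\psi$, and the use of doubling to pass between $\abs{B(x,(\operatorname{Re} z)^{1/m})}$ and $\abs{B(x,\abs{z}^{1/m})}$), so, modulo the admittedly vague treatment of a general decreasing $\xi$ --- which is handled in \cite{D-R}, not in this paper --- your approach coincides with the paper's.
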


We conclude this section by stating  the following  result concerning domination of semigroups, which is  taken from \cite{Ouh}.

\begin{proposition}\label{prop-Ouh}
Let $(\gota,D(\gota))$ and  $(\gotb, D(\gotb))$ be two positive, symmetric and continuous bilinear forms on $L^2(\Omega)$.
Let $T$ and $S$ be the self-adjoint semigroups on $L^2(\Omega)$ associated with $\gota$ and $\gotb$, respectively. Assume that the semigroups $T$ and $S$ are positive. Then the following assertions are equivalent.

\begin{enumerate}
\item[(i)] The semigroup $T$ is dominated by the semigroup $S$ in the sense that
\begin{align*}
|T(t)f|\le S(t)|f|,\;\;\mbox{ for all }\; f\in L^2(\Omega)\;\mbox{ and }\;t\ge 0.
\end{align*}

\item[(ii)] $D(\gota)$ is an ideal in $D(\gotb)$, in the sense that if $0\le v\le u$ with $u\in D(\gota)$ and $v\in D(\gotb)$, then $v\in D(\gota)$, and 
\begin{align*}
\gotb(u,v)\le \gota(u,v)\;\mbox{ for all }\; 0\le u,v\in D(\gota).
\end{align*}
\end{enumerate}
\end{proposition}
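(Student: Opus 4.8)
The plan is to encode the domination (i) as the invariance of a single closed convex set under a product semigroup, and then to apply Ouhabaz's invariance criterion. Since $T$ and $S$ preserve real functions (being positive), it suffices to work on the real Hilbert space $L^2(\Om)$. On $\mathcal H:=L^2(\Om)\times L^2(\Om)$ I would consider the positive symmetric closed form
\[
\mathfrak c\big((u_1,u_2),(v_1,v_2)\big):=\gotb(u_1,v_1)+\gota(u_2,v_2),\qquad D(\mathfrak c):=D(\gotb)\times D(\gota),
\]
whose associated self-adjoint semigroup is $R(t):=S(t)\oplus T(t)$, together with the closed convex cone
\[
C:=\big\{(f,g)\in\mathcal H:\ |g|\le f\ \text{ a.e.\ in }\Om\big\}.
\]
The first step is the elementary equivalence: (i) holds if and only if $C$ is $(R(t))$-invariant. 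Indeed, if (i) holds and $(f,g)\in C$ then $|T(t)g|\le S(t)|g|\le S(t)f$ by positivity of $S$ and $|g|\le f$, so $R(t)(f,g)\in C$; conversely, applying the invariance to $(|g|,g)\in C$ gives $|T(t)g|\le S(t)|g|$ for all $g\in L^2(\Om)$, which is (i).

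Next I would invoke Ouhabaz's criterion for the invariance of a closed convex set under a form semigroup (see, e.g., \cite{Ouhabaz-Bo}): $C$ is $(R(t))$-invariant if and only if the metric projection $P_C$ of $\mathcal H$ onto $C$ satisfies \textbf{(A)} $P_C(D(\mathfrak c))\subseteq D(\mathfrak c)$ and \textbf{(B)} $\mathfrak c(P_Cw,\,w-P_Cw)\ge0$ for every $w\in D(\mathfrak c)$. Because the constraint defining $C$ is pointwise, $P_C$ is computed pointwise: the linear change of variables $(a,b)\mapsto(a+b,a-b)$ carries the cone $\{(a,b):|b|\le a\}$ onto the positive quadrant, whence
\[
P_C(f,g)=\Big(\tfrac12\big((f+g)^+ +(f-g)^+\big),\ \tfrac12\big((f+g)^+-(f-g)^+\big)\Big).
\]

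It then remains to translate \textbf{(A)} and \textbf{(B)} into the two parts of (ii). Write $p=u+v$, $q=u-v$ for $w=(u,v)\in D(\gotb)\times D(\gota)$. Testing \textbf{(A)} against $w=(0,v^{\pm})$ already gives $v^{\pm}\in D(\gotb)$, hence $D(\gota)\subseteq D(\gotb)$ (which is anyway implicit in the statement of (ii)); granting this and using that $D(\gotb)$ and $D(\gota)$ are lattices stable under $\varphi\mapsto\varphi^{\pm}$ (first Beurling--Deny criterion, from positivity of $S$ and $T$), condition \textbf{(A)} reduces to requiring $\tfrac12(p^{+}-q^{+})\in D(\gota)$ for all such $w$. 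I would show this is equivalent to the ideal property in (ii): feeding $w=(v,u)$ into \textbf{(A)} when $0\le v\le u$, $u\in D(\gota)$, $v\in D(\gotb)$ gives $\tfrac12(u+v)\in D(\gota)$, hence $v\in D(\gota)$; conversely, the positive and negative parts of $\tfrac12(p^{+}-q^{+})$ are dominated pointwise by those of $v$ and belong to $D(\gotb)$, so the ideal property returns them, hence their difference, to $D(\gota)$. For \textbf{(B)}, inserting the explicit $P_C$ yields $\mathfrak c(P_Cw,w-P_Cw)=-\tfrac14\big[\gotb(p^{+}+q^{+},p^{-}+q^{-})+\gota(p^{+}-q^{+},p^{-}-q^{-})\big]$, so \textbf{(B)} is the inequality $\gotb(p^{+}+q^{+},p^{-}+q^{-})+\gota(p^{+}-q^{+},p^{-}-q^{-})\le0$. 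Specializing to $w=(u,v)$ with $v\ge|u|$ (so $p\ge0$, $q\le0$) collapses this to $\gotb(u+v,v-u)\le\gota(u+v,v-u)$, and as $(u+v,v-u)$ runs over all pairs $(\psi,\chi)$ of nonnegative elements of $D(\gota)$ this is precisely $\gotb(\psi,\chi)\le\gota(\psi,\chi)$ for $0\le\psi,\chi\in D(\gota)$, i.e.\ the second part of (ii). Conversely, that (ii) implies \textbf{(B)} for every $w\in D(\mathfrak c)$ follows by expanding the bracket by bilinearity and estimating term by term, using $\gotb(\varphi^{+},\varphi^{-})\le0$, $\gota(\varphi^{+},\varphi^{-})\le0$ and the form inequality just established.

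The conceptual core is the choice of the invariant cone $C$ and the computation of its metric projection $P_C$; the rest is the two-way matching with (ii). I expect the main obstacle to be the implication (ii)$\Rightarrow$\textbf{(B)}: one must verify $\mathfrak c(P_Cw,w-P_Cw)\ge0$ for \emph{every} $w\in D(\mathfrak c)$, having at one's disposal only the inequality $\gotb(u,v)\le\gota(u,v)$ for \emph{positive} $u,v\in D(\gota)$, whereas the functions $(u\pm v)^{\pm}$ that appear after the expansion need not lie in $D(\gota)$ (only in $D(\gotb)$), so the bookkeeping must be arranged to respect the form domains.
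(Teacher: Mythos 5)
Your overall route is the same one the paper implicitly relies on: the paper does not prove Proposition \ref{prop-Ouh} at all but quotes it from Ouhabaz's work, and Ouhabaz's original argument is exactly your scheme — encode domination as invariance of the cone $C=\{(f,g):|g|\le f\}$ under $S(t)\oplus T(t)$ on $L^2(\Omega)\times L^2(\Omega)$ and apply the projection criterion for invariance of closed convex sets. Your reductions are sound as far as they go: the equivalence of (i) with the invariance of $C$, the formula for $P_C$, the equivalence of condition \textbf{(A)} with the ideal property (the pointwise bounds $0\le h^\pm\le v^\pm$ for $h=\tfrac12\big((u+v)^+-(u-v)^+\big)$ are correct), and the extraction of the inequality $\gotb(\psi,\chi)\le\gota(\psi,\chi)$, $0\le\psi,\chi\in D(\gota)$, from \textbf{(B)} tested on $w=(u,v)$ with $v\ge|u|$.

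However, there is a genuine gap, and it sits exactly at the step you yourself flag: the implication (ii)$\Rightarrow$\textbf{(B)} for \emph{every} $w=(u,v)\in D(\gotb)\times D(\gota)$. Your proposed method — ``expand the bracket by bilinearity and estimate term by term'' — is not available, because the expansion of $\gota\big((u+v)^+-(u-v)^+,\,(u+v)^--(u-v)^-\big)$ into the four terms $\gota\big((u\pm v)^+,(u\pm v)^-\big)$, $\gota\big((u+v)^{\pm},(u-v)^{\mp}\big)$ requires each of $(u\pm v)^{\pm}$ to lie in $D(\gota)$, whereas in general only their differences $(u+v)^+-(u-v)^+=2h$ and $(u+v)^--(u-v)^-=2(h-v)$ belong to $D(\gota)$; the individual truncations lie only in $D(\gotb)$ since $u\in D(\gotb)$ only. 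If the expansion \emph{were} legitimate, the term-by-term bound would indeed close (using $\gota(\varphi^+,\varphi^-)\le0$, $\gotb(\varphi^+,\varphi^-)\le0$ and the positive-pair inequality), so the entire difficulty of the theorem is concentrated in this domain bookkeeping, and your write-up acknowledges the obstacle without resolving it. Closing it needs an additional idea — e.g.\ Ouhabaz passes through an intermediate criterion formulated for pairs $u,v\in D(\gota)$ with $uv\ge0$ together with disjoint-support arguments, rather than the naive expansion — and until that step is supplied the direction (ii)$\Rightarrow$(i) is not proved. The converse direction (i)$\Rightarrow$(ii) is complete in your sketch.
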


\subsection{Fractional order Sobolev spaces}
In this section we introduce the needed fractional order Sobolev spaces. Given $0<s<1$ and $\Omega\subset\R^N$ an arbitrary open set whose closure we denote by 
$\overline{\Omega}$, we set
\begin{align*}
W^{s,2}(\Omega):=\left\{u\in L^2(\Om):\;\int_\Omega\int_\Omega\frac{|u(x)-u(y)|^2}{|x-y|^{N+2s}}\;dxdy<\infty\right\},
\end{align*}
and we endow it with the norm defined by
\begin{align*}
\|u\|_{W^{s,2}(\Omega)}:=\left(\int_\Omega|u(x)|^2\;dx+\int_\Omega\int_\Omega\frac{|u(x)-u(y)|^2}{|x-y|^{N+2s}}\;dxdy\right)^{\frac 12}.
\end{align*}
We set
\begin{align*}
W_0^{s,2}(\bOm):=\Big\{u\in W^{s,2}(\R^N):\;u=0\;\mbox{ in }\;\R^N\setminus \Omega\Big\}.
\end{align*}

We have the following continuous embeddings (see e.g. \cite{NPV}):
\begin{itemize}

\item Let $2^\star:= \frac{2N}{N-2s}$ if $N>2s$ and $2^\star\in [2,\infty)$  be arbitrary  if $N=2s$. Then
\begin{align}\label{sob-imb}
W_0^{s,2}(\bOm)\hookrightarrow L^{2^\star}(\Omega).
\end{align}
\item If $N<2s$, then
\begin{align*}
W_0^{s,2}(\bOm)\hookrightarrow C_c^{0,s-\frac N2}(\R^N).
\end{align*}
\end{itemize}

For more information on fractional order Sobolev spaces, we refer to \cite{NPV,Gris,War} and the corresponding  references.

\subsection{Main results and examples} In this section we state the main results of the article and give some examples. Recall that $\Omega\subset\RR^N$ is an arbitrary open set and $T=(T(t))_{t\ge 0}$ is a self-adjoint semigroup on $L^2(\Omega)$ with generator $A$. Recall also that we say that $T$ has a fractional Gaussian estimate for $0\le t\le 1$ if \eqref{Levy} holds and that $T$ has a fractional Gaussian estimate  if  \eqref{Levy} holds for all $t\ge 0$.

The following theorem is the first main result of the paper.

\begin{theorem}\label{Theo-L11}
Assume that $T$ has a fractional Gaussian estimate for $0\le t\le 1$. Then there is a constant $\omega\ge 0$ such that the semigroup $(e^{-\omega t}T_p(t))_{t\ge 0}$ is bounded holomorphic of angle $\frac{\pi}{2}$ on $L^p(\Omega)$ for every $1\le p<\infty$.
\end{theorem}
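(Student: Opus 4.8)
The plan is to follow the strategy of Ouhabaz \cite{Ouhabaz}, replacing the Gaussian semigroup $G$ by the fractional Gaussian semigroup $(e^{-t(-\Delta)_{\RR^N}^s})_{t\ge 0}$ and using the kernel bounds \eqref{KER} in place of the classical Gaussian kernel bounds. First I would reduce to the case of a fractional Gaussian estimate for all $t\ge 0$: since \eqref{Levy} holds only for $0\le t\le 1$, one writes $T(t)=T(1)^{[t]}T(t-[t])$ and absorbs the resulting exponential growth into a factor $e^{-\omega t}$, so it suffices to prove that a semigroup satisfying \eqref{Levy} for all $t\ge 0$, after rescaling, extends to a bounded holomorphic semigroup of angle $\frac{\pi}{2}$ on each $L^p(\Omega)$, $1\le p<\infty$. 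By Remark \ref{remm} the operators $T_p(t)$ already exist and are bounded on every $L^p(\Omega)$; what must be shown is strong continuity on $L^p$ for $p<\infty$ and the holomorphic extension with the correct angle and uniform bounds.

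The heart of the argument is a kernel estimate for the holomorphic extension of $T$ on $L^2(\Omega)$. Since $T$ is self-adjoint it is bounded holomorphic of angle $\frac{\pi}{2}$ on $L^2(\Omega)$ by Remark \ref{rem1}, so $z\mapsto T(z)$ is defined and holomorphic on $\Sigma(\frac{\pi}{2})$. The domination \eqref{Levy} gives, for real $t$, that $T(t)$ maps $L^1(\Omega)$ into $L^\infty(\Omega)$ with the norm bound inherited from \eqref{ultra-estima}; I would then invoke Proposition \ref{propint} to produce a kernel $\mathbb F(z,x,y)$ for $T(z)$, holomorphic in $z\in\Sigma(\frac{\pi}{2})$, together with pointwise bounds on $|\mathbb F(z,x,y)|$ for $z$ in compact subsets. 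To upgrade these to off-diagonal bounds with the right decay, I would recognize $(\Omega,dx,|\cdot|)$ as (a subset of) a space of homogeneous type with $|B(x,r)|\simeq r^N$ and apply Proposition \ref{Extcomp} with $m=2s$: the two hypotheses there are exactly the on-diagonal bound $|\mathbb F(t,x,y)|\lesssim t^{-N/2s}$ (from \eqref{ultra-estima} and \eqref{Levy}) and the full kernel bound $|\mathbb F(t,x,y)|\le P_s(t,x,y)\lesssim t^{-N/2s}(1+|x-y|t^{-1/2s})^{-(N+2s)}$ coming from \eqref{KER} and the domination \eqref{Levy}, with $\xi(r)=(1+r^{1/2s})^{-(N+2s)}$. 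Proposition \ref{Extcomp} then yields, for each $\theta<\frac{\pi}{2}$, a bound
\begin{align*}
|\mathbb F(z,x,y)|\le C(\operatorname{Re} z)^{-\frac N{2s}}\Big(1+|x-y||z|^{-\frac 1{2s}}\Big)^{-(N+2s)(1-\varepsilon)},\quad z\in\Sigma(\theta),
\end{align*}
for any $\varepsilon\in(0,1]$.

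From such a kernel bound one extracts uniform $L^1\to L^1$ (equivalently $L^\infty\to L^\infty$) bounds for $T(z)$ on sectors: choosing $\varepsilon$ small enough that $(N+2s)(1-\varepsilon)>N$, the quantity $\int_{\RR^N}(\operatorname{Re} z)^{-N/2s}(1+|x-y||z|^{-1/2s})^{-(N+2s)(1-\varepsilon)}\,dy$ is finite and, since $\operatorname{Re} z\simeq |z|$ on $\Sigma(\theta)$, bounded uniformly in $z\in\Sigma(\theta)$; this gives $\sup_{z\in\Sigma(\theta)}\|T(z)\|_{\mathcal L(L^\infty(\Omega))}<\infty$ and by duality $\sup_{z\in\Sigma(\theta)}\|T(z)\|_{\mathcal L(L^1(\Omega))}<\infty$. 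Interpolating with the $L^2$-bound (Stein interpolation, or simply Riesz--Thorin for fixed $z$) gives $\sup_{z\in\Sigma(\theta)}\|T(z)\|_{\mathcal L(L^p(\Omega))}<\infty$ for all $1\le p\le\infty$ and all $\theta<\frac{\pi}{2}$. Holomorphy of $z\mapsto T(z)$ as an $\mathcal L(L^p)$-valued map on $\Sigma(\frac{\pi}{2})$ follows from holomorphy of $\mathbb F(\cdot,x,y)$ together with these local uniform bounds (vector-valued Morera / dominated convergence, as in \cite[Appendix A]{ABHN}); strong continuity as $z\to 0$ in $L^p$, $1\le p<\infty$, follows from strong continuity on $L^2$, density of $L^2\cap L^p$, and the uniform bounds, which also settles the strong continuity claim left open in Remark \ref{remm}.

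The main obstacle is the off-diagonal kernel estimate for complex $z$: the subordination formula \eqref{subor} does not survive the passage to a dominating semigroup, so one cannot simply subordinate the complex-time Gaussian bounds. This is precisely why Proposition \ref{Extcomp} is needed — it is the analytic mechanism that turns on-diagonal plus real-time polynomial decay into complex-time polynomial decay on a subsector, at the cost of an arbitrarily small loss $\varepsilon$ in the exponent. One must check carefully that the loss is harmless, i.e. that $(N+2s)(1-\varepsilon)$ can be kept strictly above $N$, which holds because $2s>0$; and that the homogeneous-type hypotheses are met uniformly on $\Omega$ regardless of its geometry, which is true since the bounds in \eqref{KER} are for the kernel on all of $\RR^N$ and domination only makes the kernel on $\Omega\times\Omega$ smaller. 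Everything else — the reduction via $e^{-\omega t}$, the interpolation, the passage from kernel holomorphy to operator holomorphy — is routine.
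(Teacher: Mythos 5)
Your overall strategy is the same as the paper's (reduce to a fractional Gaussian estimate for all $t\ge 0$, extract a kernel via Proposition \ref{propint}, upgrade to complex time via Proposition \ref{Extcomp} with $m=2s$, then integrate the kernel bound to get uniform sector bounds, holomorphy and strong continuity), but there is a genuine gap at the central step. You assert that the two hypotheses of Proposition \ref{Extcomp} are ``exactly'' the real-time on-diagonal bound $|\mathbb F(t,x,y)|\lesssim t^{-N/2s}$ and the real-time kernel bound coming from \eqref{Levy} and \eqref{KER}. That misreads the proposition: its first hypothesis is an on-diagonal bound for \emph{complex} $z$ in the whole sector, $|\mathbb K(z,x,y)|\le C\,|B(x,(\operatorname{Re} z)^{1/m})|^{-1}$, i.e.\ here $|K(z,x,y)|\le C(\operatorname{Re} z)^{-N/2s}$ for all $z\in\Sigma(\psi)$. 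The same issue arises earlier: to apply Proposition \ref{propint} with $D$ a complex sector you must verify $\sup_{z\in\mathbb K}\|T(z)\|_{\mathcal L(L^1,L^\infty)}<\infty$ on compact subsets of the sector, and the domination \eqref{Levy} only gives this for real $t$; Proposition \ref{propint} does not itself produce quantitative pointwise bounds for complex $z$, it only gives a kernel once the complex-time $L^1\to L^\infty$ bound is already in hand. So the complex-time on-diagonal estimate is an input you never establish, and without it Proposition \ref{Extcomp} cannot be invoked. The paper supplies precisely this step: writing $z=t+is\in\Sigma(\theta_2)$ as $z=\tfrac{(1-\delta)t}{2}+(\delta t+is)+\tfrac{(1-\delta)t}{2}$ and using the semigroup property, the real-time bounds $\|T(\cdot)\|_{\mathcal L(L^1,L^2)},\ \|T(\cdot)\|_{\mathcal L(L^2,L^\infty)}\lesssim t^{-N/4s}$ (from \eqref{Levy} and \eqref{ultra-estima} plus duality), and the uniform $L^2$-boundedness of $T(z)$ on a slightly larger subsector (self-adjointness, after rescaling by $e^{-wt}$), to obtain $\|T(z)\|_{\mathcal L(L^1,L^\infty)}\le C(\operatorname{Re} z)^{-N/2s}$ on $\Sigma(\theta_2)$, and then the Dunford--Pettis criterion to convert this into the complex-time pointwise bound $|K(z,x,y)|\le C(\operatorname{Re} z)^{-N/2s}$. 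You need to add this factorization argument (or an equivalent) before your application of Propositions \ref{propint} and \ref{Extcomp} is legitimate.

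A secondary weak point: your strong continuity claim for $1\le p<2$, ``follows from strong continuity on $L^2$, density of $L^2\cap L^p$, and the uniform bounds,'' is not enough when $\Omega$ has infinite measure, since $L^2$-convergence does not imply $L^p$-convergence there. The paper argues differently for $p=1$: for $f\in L^1\cap L^2$ it extracts a sequence $t_n\downarrow 0$ with $T(t_n)f\to f$ a.e., uses the domination \eqref{Levy} to apply dominated convergence and get $T_1(t_n)f\to f$ in $L^1$, and then uses holomorphy and the uniform sector bounds to pass from the sequence to $z\to 0$ in $\Sigma(\theta_1)$. You should either reproduce that argument or give another justification valid for infinite-measure $\Omega$.
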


Next we give an example.

\begin{example}\label{ex1}
Let $\Omega\subset\RR^N$ be an arbitrary opens set.
Let $\mathcal E:D(\mathcal E)\times D(\mathcal E)\to\RR$ with $D(\mathcal E)=\H$ be the closed, continuous, non-negative and symmetric bilinear form defined by

\begin{align}\label{matE}
\mathcal E(u,v):=\frac{C_{N,s}}{2}\int_{\R^N}\int_{\R^N}\frac{(u(x)-u(y))(v(x)-v(y))}{|x-y|^{N+2s}}\;dxdy.
\end{align}
Let $(-\Delta)_D^s$ be the self-adjoint operator on $L^2(\Omega)$ associated with $\mathcal E$ in the sense that

\begin{equation}\label{opH}
\begin{cases}
D((-\Delta)_D^s):=\Big\{u\in \H:\;\exists\;f\in L^2(\Omega);\; \mathcal E(u,v)=(f,v)_{L^2(\Omega)},\;\;\forall\;v\in\H\Big\},\\
(-\Delta)_D^su=f.
\end{cases}
\end{equation}
Then $(-\Delta)_D^s$ is the realization in $L^2(\Om)$ of $(-\Delta)^s$ with the zero Dirichlet exterior condition: $u=0$  in $\R^N\setminus\Omega$ and is given precisely by

\begin{equation}\label{opH2}
D((-\Delta)_D^s)=\Big\{u\in\H:\; (-\Delta)^su\in L^2(\Omega)\Big\},\;(-\Delta)_D^su=(-\Delta)^su.
\end{equation}
We have the following results.

\begin{enumerate}
\item The operator $-(-\Delta)_D^s$ generates a strongly continuous submarkovian semigroup $(T_s(t))_{t\ge 0}$ on $L^2(\Omega)$. 

\item The semigroup $T_s$ has a fractional Gaussian estimate.
\end{enumerate}
As a consequence, it follows from Theorem \ref{Theo-L11} that the semigroup $(T_{s,p}(t))_{t\ge 0}$ is bounded holomorphic of angle $\frac{\pi}{2}$ on $L^p(\Omega)$ for every $1\le p<\infty$.

\begin{proof}
 (a) Firstly, since the symmetric closed form $\mathcal E$ is non-negative, elliptic, continuous and $\H$ is dense in $L^2(\Om)$, we have that the operator $-(-\Delta)_D^s$ generates a strongly continuous semigroup $T_s=(e^{-t(-\Delta)_D^s})_{t\ge 0}$ on $L^2(\Omega)$. 

Secondly, we claim that the semigroup is positive-preserving. Indeed, let $u\in\H$.  We consider the decomposition $u=u^+-u^-$ of $u$ where $u^+$ and $u^-$ are the positive and negative parts of $u$, respectively. Then by \cite[Lemma 2.6]{War}, $u^+\in\H$. Since
\begin{align*}
\Big(u^+(x)-u^+(y)\Big)\Big(u^-(x)-u^-(y)\Big)=&u^+(x)u^-(x)+ u^+(y)u^-(y)-u^+(x)u^-(y)-u^+(y)u^-(x)\\
=&-\Big(u^+(x)u^-(y)+u^+(y)u^-(x)\Big)\le 0,
\end{align*}
for a.e. $x,y\in\Omega$, it follows that

\begin{align}\label{For-Ne}
\mathcal E(u^+,u^-)=&\frac{C_{N,s}}{2}\int_{\R^N}\int_{\R^N}\frac{(u^+(x)-u^+(y))(u^-(x)-u^-(y))}{|x-y|^{N+2s}}\;dxdy\notag\\
=&-\frac{C_{N,s}}{2}\int_{\R^N}\int_{\R^N}\frac{u^+(x)u^-(y)+u^+(y)u^-(x)}{|x-y|^{N+2s}}\;dxdy\le 0.
\end{align}
By \cite[Theorem 1.3.2]{Davis}, the inequality  \eqref{For-Ne} implies that the semigroup $T_s$ is positive-preserving.

Thirdly, we claim that $T_s$ is $L^\infty$-contractive. Let $0\le u\in \H$. It follows from \cite[Lemma 2.7]{War} that $u\wedge 1\in \H$. 
We shall show that 
\begin{align*}
\mathcal E(u\wedge 1,u\wedge 1)\le \mathcal E(u,u) \;\mbox{  for  every \ } u\in \H,\, u\ge 0.
\end{align*}
Let
\begin{align*}
A:=\{x\in\R^N:\;u(x)\le 1\}\;\mbox{ and }\; B:=\R^N\setminus A.
\end{align*}
For almost every $(x,y)\in A\times A$, we have that
\begin{align}\label{A1}
\Big((u\wedge 1)(x)-(u\wedge 1)(y)\Big)^2= \Big(u(x)\wedge 1-u(y)\wedge 1\Big)^2= (u(x)-u(y))^2.
\end{align}
For almost every $(x,y)\in A\times B$, we have that
\begin{align}\label{A2}
\Big((u\wedge 1)(x)-(u\wedge 1)(y)\Big)^2=\Big(u(x)\wedge 1-u(y)\wedge 1\Big)^2=(u(x)-1)^2 \le (u(y)-u(x))^2.
\end{align}
For almost every $(x,y)\in B\times B$, we have that
\begin{align}\label{A3}
\Big((u\wedge 1)(x)-(u\wedge 1)(y)\Big)^2= 0.
\end{align}
It follows from \eqref{A1}, \eqref{A2} and \eqref{A3} that

\begin{align}\label{ine-sub}
\mathcal E(u\wedge 1,u\wedge 1)=&\frac{C_{N,s}}{2}\int_{\R^N}\int_{\R^N}\frac{((u\wedge 1)(x)-(u\wedge 1)(y))((u\wedge 1)(x)-(u\wedge 1)(y))}{|x-y|^{N+2s}}\;dxdy\notag\\
=&\frac{C_{N,s}}{2}\int_{}\int_{A}\frac{((u\wedge 1)(x)-(u\wedge 1)(y))((u\wedge 1)(x)-(u\wedge 1)(y))}{|x-y|^{N+2s}}\;dxdy\notag\\
&+2\frac{C_{N,s}}{2}\int_{A}\int_{B}\frac{((u\wedge 1)(x)-(u\wedge 1)(y))((u\wedge 1)(x)-(u\wedge 1)(y))}{|x-y|^{N+2s}}\;dxdy\notag\\
&+\frac{C_{N,s}}{2}\int_{B}\int_{B}\frac{((u\wedge 1)(x)-(u\wedge 1)(y))((u\wedge 1)(x)-(u\wedge 1)(y))}{|x-y|^{N+2s}}\;dxdy\notag\\
\le &\frac{C_{N,s}}{2}\int_{\R^N}\int_{\R^N}\frac{|u(x)-u(y)|^2}{|x-y|^{N+2s}}\;dxdy=\mathcal E(u,u).
\end{align}
By  \cite[Theorem 1.3.3]{Davis}, the estimate \eqref{ine-sub} implies that the semigroup $T_s$ is $L^\infty$-contractive.
We have shown that the semigroup $T_s$ is submarkovian and the proof of Part (a) is complete.

(b) Recall that the semigroups $T_s$ and $(e^{-t(-\Delta)_{\RR^N}^s})_{t\ge 0}$ are positive.
Let $0\le u\le v$ with $u\in W^{s,2}(\RR^N)$ and $v\in W_0^{s,2}(\bOm)$. Then it is clear that $u=0$ in $\RR^N\setminus\Omega$. Hence, $u\in W_0^{s,2}(\bOm)$. We have shown that $W_0^{s,2}(\bOm)$ is an ideal in $W_0^{s,2}(\RR^N)$. Let $0\le u,v\in W_0^{s,2}(\bOm)$. Clearly we have that

\begin{align*}
\mathcal E(u,v)= \mathcal F(u,v),
\end{align*}
where we recall that  $\mathcal F$ has been defined in \eqref{calF}.
By Proposition \ref{prop-Ouh} the above properties imply that  the semigroup $(T_s(t))_{t\ge 0}$ is dominated by the semigroup $(e^{-t(-\Delta)_{\RR^N}^s})_{t\ge 0}$, that is, $T_s$ has a fractional Gaussian estimate. The proof is finished
\end{proof}
\end{example}

\begin{remark}
{\em Firstly, it follows from the embedding \eqref{sob-imb} that if $\Omega$ is bounded, then the embedding $W_0^{s,2}(\bOm)\hookrightarrow L^2(\Omega)$ is compact and this would imply that the operator
$(-\Delta)_D^s$ has a compact resolvent and its spectrum consists of   eigenvalues $(\lambda_n)_{n\in\NN}$ satisfying $0<\lambda_1\le\lambda_2\le\cdots\le\lambda_n\le\cdots$ and $\lim_{n\to\infty}\lambda_n=\infty$.
Secondly, let us mention that the operator $(-\Delta)_D^s$ is different from the spectral Dirichlet fractional Laplacian, that is, the fractional $s$-power of the Laplace operator with zero Dirichlet boundary condition. Their first eigenvalues are different, the eigenfunctions of the spectral Dirichlet fractional Laplacian are smooth (it has the same eignefunctions as the Dirichlet Laplacian) and this is not the case for $(-\Delta)_D^s$ where the eigenfunctions are even not Lipschitz continuous. For more details we refer to \cite{BWZ1,SV2}.
}
\end{remark}

The following result which can be viewed as a corollary of Theorem \ref{Theo-L11} is our second main result.

\begin{corollary}\label{Theo-C}
Let $C_0(\Om):=\{u\in C(\Omega):\; \lim_{|x|\to\infty}u(x)=0\}$. Assume that $T$ has a fractional Gaussian estimate for $0\le t\le 1$ and there exists a semigroup $T_0$ on $C_0(\Omega)$ such that $T_0(t)f=T(t)f$ for $f\in C_0(\Omega)\cap L^2(\Omega)$. Then there exists $\omega\ge 0$ such that the semigroup $(e^{-\omega t}T_0(t))_{t\ge 0}$ is bounded holomorphic of angle $\frac{\pi}{2}$ on $C_0(\Omega)$. If $\Omega$ is bounded then the same conclusion also holds if one replaces $C_0(\Om)$ by $C(\bOm)$.
\end{corollary}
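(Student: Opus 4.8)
The plan is to transfer the $L^1$–holomorphy of Theorem \ref{Theo-L11} to $C_0(\Omega)$ by a duality argument, exploiting that $C_0(\Omega)$ sits isometrically as a closed subspace of $L^\infty(\Omega)=L^1(\Omega)^*$. Fix the constant $\omega\ge 0$ furnished by Theorem \ref{Theo-L11} (the same for every $p$), put $S_p(t):=e^{-\omega t}T_p(t)$ and $B_p:=A_p-\omega$, so that each $B_p$ generates a bounded holomorphic semigroup of angle $\tfrac{\pi}{2}$ on $L^p(\Omega)$, $1\le p<\infty$; let $B_0:=A_0-\omega$ be the generator of the $C_0$–semigroup $S_0(t):=e^{-\omega t}T_0(t)$ on $C_0(\Omega)$. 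By Remark \ref{rem2} (applied with $\theta=\tfrac{\pi}{2}-\varepsilon$ and letting $\varepsilon\downarrow 0$), it suffices to show that for every $\varepsilon\in(0,\tfrac{\pi}{2})$ one has $\Sigma(\pi-\varepsilon)\subset\rho(B_0)$ together with $\sup_{\lambda\in\Sigma(\pi-\varepsilon)}\|\lambda R(\lambda,B_0)\|_{\mathcal L(C_0(\Omega))}<\infty$.

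The first step is to identify the adjoint semigroup of $S_1$. Since $S_2$ is self-adjoint and the $S_p$ are consistent, for $f\in C_0(\Omega)\cap L^2(\Omega)$ and $g\in L^1(\Omega)\cap L^2(\Omega)$ one has $\int_\Omega(S_1(t)g)\,f\,dx=\int_\Omega(S_2(t)g)\,f\,dx=\int_\Omega g\,(S_2(t)f)\,dx=\int_\Omega g\,(S_0(t)f)\,dx$; extending in $g$ by density of $L^1(\Omega)\cap L^2(\Omega)$ in $L^1(\Omega)$ and in $f$ by density of $C_0(\Omega)\cap L^2(\Omega)$ in $C_0(\Omega)$, one sees that the weak${}^*$–continuous adjoint semigroup $S_1(t)^*$ on $L^\infty(\Omega)$ leaves $C_0(\Omega)$ invariant and satisfies $S_1(t)^*|_{C_0(\Omega)}=S_0(t)$. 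Hence, using the bilinear $L^1$–$L^\infty$ pairing, $R(\lambda,B_1^*)=R(\lambda,B_1)^*$, so that $\rho(B_1^*)=\rho(B_1)\supset\Sigma(\pi-\varepsilon)$ and $\|R(\lambda,B_1^*)\|_{\mathcal L(L^\infty(\Omega))}=\|R(\lambda,B_1)\|_{\mathcal L(L^1(\Omega))}$; moreover, integrating the semigroup identity, $R(\lambda,B_1^*)f=R(\lambda,B_0)f$ for every $f\in C_0(\Omega)$ and every $\lambda$ with $\operatorname{Re}\lambda$ large, so $R(\lambda,B_1^*)$ maps $C_0(\Omega)$ into itself on a right half–plane.

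Next I would propagate this invariance to the whole sector. For fixed $f\in C_0(\Omega)$ the map $\lambda\mapsto R(\lambda,B_1^*)f$ is holomorphic on the connected open set $\Sigma(\pi-\varepsilon)$ with values in $L^\infty(\Omega)$, and its composition with the quotient map $L^\infty(\Omega)\to L^\infty(\Omega)/C_0(\Omega)$ is holomorphic and vanishes on a nonempty open subset (a right half–plane), hence vanishes identically. Therefore $R(\lambda,B_1^*)C_0(\Omega)\subset C_0(\Omega)$ for all $\lambda\in\Sigma(\pi-\varepsilon)$, and the operators $R_\lambda:=R(\lambda,B_1^*)|_{C_0(\Omega)}$ form a pseudo-resolvent on $\Sigma(\pi-\varepsilon)$ that coincides with the (injective) resolvent $R(\cdot,B_0)$ on a right half–plane. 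A pseudo-resolvent that agrees with the resolvent of a closed operator on a nonempty open set is the resolvent of that operator, whence $\Sigma(\pi-\varepsilon)\subset\rho(B_0)$ and $R(\lambda,B_0)=R_\lambda$. Since $C_0(\Omega)\hookrightarrow L^\infty(\Omega)$ isometrically,
\[
\|\lambda R(\lambda,B_0)\|_{\mathcal L(C_0(\Omega))}\le\|\lambda R(\lambda,B_1^*)\|_{\mathcal L(L^\infty(\Omega))}=\|\lambda R(\lambda,B_1)\|_{\mathcal L(L^1(\Omega))}\le C_\varepsilon\qquad(\lambda\in\Sigma(\pi-\varepsilon)),
\]
the last estimate being exactly the angle-$\tfrac{\pi}{2}$ assertion of Theorem \ref{Theo-L11} for $p=1$ (read through Remark \ref{rem2}). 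By Remark \ref{rem2} again, $S_0$—hence $e^{-\omega t}T_0(t)$—is bounded holomorphic of angle $\tfrac{\pi}{2}$ on $C_0(\Omega)$.

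For bounded $\Omega$ the argument is identical with $C(\bOm)$ in place of $C_0(\Omega)$: since $|\Omega|<\infty$ we have $C(\bOm)\subset L^2(\Omega)$, so $T_0$ is again a restriction of $T$; $C(\bOm)$ embeds isometrically as a closed subspace of $L^\infty(\Omega)$; and $S_1(t)^*$ leaves it invariant by hypothesis, so every preceding step goes through unchanged. I expect the main obstacle to be the propagation of the invariance $R(\lambda,B_1^*)C_0(\Omega)\subset C_0(\Omega)$ from a half–plane to the full sector $\Sigma(\pi-\varepsilon)$, together with the ensuing verification that the resulting pseudo-resolvent is genuinely $R(\cdot,B_0)$—this is precisely what lets the $L^\infty$–resolvent bound descend to $C_0(\Omega)$; by contrast, the identification $S_1(t)^*|_{C_0(\Omega)}=S_0(t)$ is routine, provided one keeps the bilinear $L^1$–$L^\infty$ pairing in force so that no spurious complex conjugates intrude.
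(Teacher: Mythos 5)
Your proposal is correct, and its skeleton is the same duality argument as the paper's: pass to $L^\infty(\Omega)=L^1(\Omega)^\star$, use that $C_0(\Omega)$ sits isometrically as a closed subspace, and win the estimate from the chain $\|R(\lambda,A_0)\|_{\mathcal L(C_0(\Omega))}\le\|R(\lambda,A_\infty)\|_{\mathcal L(L^\infty(\Omega))}=\|R(\lambda,A_1)\|_{\mathcal L(L^1(\Omega))}\le C/|\lambda|$ coming from Theorem \ref{Theo-L11} with $p=1$. Where you genuinely diverge is in how the sector is shown to lie in $\rho(A_0)$. The paper argues spectrally: $\partial\sigma(A_0)$ lies in the approximate point spectrum, approximate eigensequences for $A_0$ are approximate eigensequences for the extension $A_\infty=A_1^\star$, hence $\sigma(A_0)\subset\sigma(A_\infty)\subset(-\infty,0]$, and then the identity $(\lambda-A_0)^{-1}=(\lambda-A_\infty)^{-1}|_{C_0(\Omega)}$, known for $\lambda>0$, is extended to the sector by analytic continuation. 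You instead never touch the spectrum of $A_0$ directly: you propagate the invariance $R(\lambda,A_\infty)C_0(\Omega)\subset C_0(\Omega)$ from a right half-plane to the whole sector by applying the identity theorem to $\lambda\mapsto q\bigl(R(\lambda,A_\infty)f\bigr)$ with $q:L^\infty(\Omega)\to L^\infty(\Omega)/C_0(\Omega)$ the quotient map, and then identify the restricted family with $R(\cdot,A_0)$ via the standard pseudo-resolvent argument, which yields the resolvent-set inclusion and the identity in one stroke. Both routes are sound; the paper's is shorter, while yours is more self-contained at the resolvent level and avoids the approximate-point-spectrum lemma. Two small remarks: your explicit verification that $S_1(t)^\star|_{C_0(\Omega)}=S_0(t)$ is precisely the step the paper compresses into ``it is readily seen that $A_\infty$ is an extension of $A_0$'', and both arguments rely equally on the same tacit compatibility of the self-adjointness of $T(t)$ with the bilinear $L^1$--$L^\infty$ pairing (i.e.\ reality of the semigroup), which you at least flag explicitly; neither point constitutes a gap relative to the paper.
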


%\begin{theorem}%\label{Theo-C}
%Assume that $\Omega\subset\R^N$ is a bounded open set with a Lipschitz continuous boundary. Let $(-\Delta)_{D}^s$ be the operator defined in \eqref{opH}-\eqref{opH2}.
%Let $(-\Delta)_{D,c}^s$ be the part of $(-\Delta)_D^s$ in $C_0(\bOm):=\{u\in C_c(\R^N),\;\;u=0\mbox{ in }\; \R^N\setminus\Omega\}$. Then $-(-\Delta)_{D,c}^s$ generates a  bounded holomorphic semigroup of angle $\frac{\pi}{2}$on $C_0(\bOm)$.
%\end{theorem}

Next we give an example.

\begin{example}
Assume that $\Omega\subset\R^N$ is a bounded open set with a Lipschitz continuous boundary. Let $(-\Delta)_{D}^s$ be the operator defined in \eqref{opH}-\eqref{opH2}.
Let $(-\Delta)_{D,c}^s$ be the part of $(-\Delta)_D^s$ in $C_0(\bOm):=\{u\in C_c(\R^N),\;\;u=0\mbox{ in }\; \R^N\setminus\Omega\}$. Then $-(-\Delta)_{D,c}^s$ generates a  bounded holomorphic semigroup of angle $\frac{\pi}{2}$  on $C_0(\bOm)$.

\begin{proof}
We prove the result in several steps.

{\bf Step 1}.
Let $\lambda>0$ and consider the following exterior Dirichlet problem:

\begin{equation}\label{DP}
\begin{cases}
(-\Delta)^su +\lambda u=f\;\;&\mbox{ in }\;\Omega,\\
u=0&\mbox{ in }\;\RR^N\setminus\Omega.
\end{cases}
\end{equation}

By a weak solution of \eqref{DP} we mean a function $u\in W_0^{s,2}(\bOm)$ such that
\begin{align*}
\mathcal E(u,v)+\lambda \int_{\Omega}uv\;dx=\int_{\Om}fv\;dx,\;\;\forall \;v\in W_0^{s,2}(\bOm),
\end{align*}
provided that the right hand side makes sense and where $\mathcal E$ is given in \eqref{matE}.

It has been shown in \cite{RS2-2} that if $f\in L^\infty(\Omega)$, then the Dirichlet problem \eqref{DP} has a unique weak solution $u$; moreover, $u\in W_0^{s,2}(\bOm)\cap C_0(\bOm)$.\\

{\bf Step 2}. Let $\lambda>0$ be given  and denote by $R(\lambda,(-\Delta)_D^s)$ the resolvent of $(-\Delta)_D^s$. That is, the operator $(\lambda+(-\Delta)_D^s)^{-1}$.
We claim that $R(\lambda,(-\Delta)_D^s)(C_0(\bOm))\subset C_0(\bOm)$ and is dense in $C_0(\bOm)$. Let $f\in L^\infty(\Omega)$ and set $u:=R(\lambda,(-\Delta)_D^s)f$. Then $u\in W_0^{s,2}(\bOm)$ and satisfies the equality
\begin{align}
\mathcal E(u,v)+\lambda \int_{\Omega}uv\;dx=\int_{\Omega}fv\;dx,\;\;\; \forall \;v\in W_0^{s,2}(\bOm).
\end{align}
Hence, $u$ is a weak solution of the Dirichlet problem \eqref{DP}. It follows from Step 1 that $u\in W_0^{s,2}(\bOm)\cap C_0(\bOm)$. Thus, $R(\lambda,(-\Delta)_D^s)(L^\infty(\Omega))\subset C_0(\bOm)$ and in particular, we have that $R(\lambda,(-\Delta)_D^s)(C_0(\bOm))\subset C_0(\bOm)$.  Denote by $\mathcal D(\Omega)$ the space of test functions on $\Omega.$ Since $\mathcal D(\Omega)\subset R(\lambda,(-\Delta)_D^s)(C_0(\bOm))$ and is dense in $C_0(\bOm)$, it follows that $R(\lambda,(-\Delta)_D^s)(C_0(\bOm))$ is dense in $C_0(\bOm)$ and the claim is proved.  We have also shown that $T_s(t)$ leaves the space $C_0(\bOm)$ invariant, that is, $T_s(t)(C_0(\bOm))\subset C_0(\bOm)$.\\

{\bf Step 3}. Let $(-\Delta)_{D,c}^s$ be the operator defined on $C_0(\bOm)$ by

\begin{equation}
\begin{cases}
D((-\Delta)_{D,c}^s)=\Big\{u\in D((-\Delta)_{D}^s)\cap C_0(\bOm):\; (-\Delta)_{D}^su\in C_0(\bOm)\Big\},\\
(-\Delta)_{D,c}^su=(-\Delta)_{D}^su.
\end{cases}
\end{equation}
It follows from Step 2 that $(-\Delta)_{D,c}^s$ is well-defined and is the part of $(-\Delta)_{D}^s$ in $C_0(\bOm)$. 
%More precisely, we have that
%
%
%\begin{equation*}
%\begin{cases}
%D((-\Delta)_{D,c}^s)=\Big\{u\in W_0^{s,2}(\bOm\cap C_0(\bOm):\; (-\Delta)_{D}^su\in C_0(\bOm)\Big\},\\
%(-\Delta)_{D,c}^su=(-\Delta)^su.
%\end{cases}
%\end{equation*}

Recall that the semigroup $T_s$ is submarkovian. Hence, by Remark \ref{remm} there are consistent semigroups on $L^p(\Omega)$, $1\le p\le\infty$. We denote by $-(-\Delta)_{D,\infty}^s=-[(-\Delta)_{D,1}^s]^\star$ the generator of the semigroup on $L^\infty(\Omega)$ where $-[(-\Delta)_{D,1}^s]^\star$ is the dual of the generator of the semigoup on $L^1(\Omega)$. The consistency property together with Step 2 imply that for each real number $\lambda>0$, we have 

\begin{align*}
R(\lambda, (-\Delta)_{D}^s)(L^\infty(\Omega))=R(\lambda,(-\Delta)_{D,\infty}^s)(L^\infty(\Omega))=D((-\Delta)_{D,\infty}^s)\subset C_0(\bOm).
\end{align*}
Since $C_0(\bOm)$ is a closed subspace of $L^\infty(\Omega)$, and $D((-\Delta)_{D,c}^s)=D((-\Delta)_{D,\infty}^s)$ is dense in $C_0(\bOm)$ (by Step 2), and $T_s(t)$ leaves $C_0(\bOm)$ invariant (by Step 2), it follows that the operator $-(-\Delta)_{D,c}^s$ generates a strongly continuous semigroup $T_{s,0}$ on $C_0(\bOm)$. Since $T_s$ has a fractional Gaussian estimate (by Example \ref{ex1}), then the result follows from Corollary \ref{Theo-C}. 
%it follows from \cite[Remark 3.7.13]{ABHN} that the semigroup $(e^{-t(-\Delta)_{D,c}^s})_{t\ge 0}$ is bounded holomorphic of angle $\frac{\pi}{2}$ on $C_0(\bOm)$. The proof is finished.
\end{proof}

\end{example}

\section{Proof of the main results}\label{sec-3}

%\subsection{Proof of Theorem \ref{Theo-L11}}

In this section we give the proof of the main results. In order to proceed with the proof we need  some preliminary results.  We start with the following.

\begin{lemma}\label{lem31}
Assume that $T$ has a fractional Gaussian estimate for $0\le t\le 1$. Then there exists $\omega\ge 0$ such that the semigroup $(e^{-\omega t}T(t))_{t\ge 0}$ has a fractional Gaussian estimate for all $t\ge 0$.
\end{lemma}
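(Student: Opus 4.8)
\textbf{Proof plan for Lemma \ref{lem31}.}

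The plan is to use the semigroup property to bootstrap the fractional Gaussian estimate from the interval $[0,1]$ to all of $[0,\infty)$, at the cost of an exponential factor $e^{\omega t}$. First I would fix the constants $M$ and $b$ and the exponent $s\in(0,1)$ from the hypothesis \eqref{Levy}, so that $|T(t)f|\le Me^{-bt(-\Delta)_{\RR^N}^s}|f|$ for $0\le t\le 1$ and all $f\in L^2(\Omega)$. The key observation is that for arbitrary $t\ge 0$ we can write $t=n+r$ with $n=\lfloor t\rfloor\in\NN_0$ and $r\in[0,1)$, and then decompose $T(t)=T(r)T(1)^n$ using the semigroup law. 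Applying \eqref{Levy} repeatedly together with the positivity of the dominating semigroup $(e^{-t(-\Delta)_{\RR^N}^s})_{t\ge 0}$ — which lets us push absolute values through and preserve the pointwise domination at each step — one gets
\begin{align*}
|T(t)f| = |T(r)T(1)^n f| \le Me^{-br(-\Delta)_{\RR^N}^s}|T(1)^n f| \le M^{n+1}e^{-br(-\Delta)_{\RR^N}^s}e^{-bn(-\Delta)_{\RR^N}^s}|f| = M^{n+1}e^{-bt(-\Delta)_{\RR^N}^s}|f|.
\end{align*}
Here one uses at each stage that if $0\le g\le h$ then $e^{-\tau(-\Delta)_{\RR^N}^s}g\le e^{-\tau(-\Delta)_{\RR^N}^s}h$, so that the estimate propagates through the $n$-fold composition, and one uses the semigroup identity $e^{-br(-\Delta)_{\RR^N}^s}e^{-bn(-\Delta)_{\RR^N}^s}=e^{-b(r+n)(-\Delta)_{\RR^N}^s}$ to recombine the exponents.

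It remains to absorb the factor $M^{n+1}$. Since $n\le t$, we have $M^{n+1}\le M\cdot M^t = M e^{t\log M}$ if $M\ge 1$ (and if $M<1$ the factor is already bounded by $M\le 1$, so there is nothing to do and one may take $\omega=0$, or replace $M$ by $\max\{M,1\}$). Setting $\omega:=\log^+ M=\max\{\log M,0\}\ge 0$, we obtain
\begin{align*}
|T(t)f|\le M e^{\omega t}e^{-bt(-\Delta)_{\RR^N}^s}|f|,\qquad t\ge 0,\ f\in L^2(\Omega),
\end{align*}
which is precisely the statement that the rescaled semigroup $(e^{-\omega t}T(t))_{t\ge 0}$ satisfies $|e^{-\omega t}T(t)f|\le Me^{-bt(-\Delta)_{\RR^N}^s}|f|$ for all $t\ge 0$, i.e. has a fractional Gaussian estimate with the same constants $M,b$ and exponent $s$. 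Note that $(e^{-\omega t}T(t))_{t\ge 0}$ is still a self-adjoint semigroup on $L^2(\Omega)$, so the definition applies.

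There is no serious obstacle here; the only point requiring a little care is the propagation of the pointwise inequality through the iterated composition, which relies on the positivity (order-preserving property) of $(e^{-t(-\Delta)_{\RR^N}^s})_{t\ge 0}$ — this is exactly the submarkovian property recalled in the introduction. One should also note the mild subtlety that the hypothesis is stated for $f\in L^2(\Omega)$ and that $T(1)f$, $e^{-br(-\Delta)_{\RR^N}^s}|f|$ etc. all remain in $L^2$, so every quantity in the chain of inequalities is a well-defined $L^2$ function and the pointwise (a.e.) comparisons make sense at each stage.
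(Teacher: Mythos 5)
Your proposal is correct and follows essentially the same argument as the paper: write $t=n+r$, iterate the estimate on $[0,1]$ via the semigroup law and the positivity of $(e^{-t(-\Delta)_{\RR^N}^s})_{t\ge 0}$, and absorb $M^{n+1}$ into $Me^{\omega t}$ with $\omega=\log^+ M$. Your version merely makes explicit the order-preservation step and the case $M<1$, which the paper handles by simply assuming $M\ge 1$.
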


\begin{proof}
By assumption $|T(t)f|\le M e^{-tb(-\Delta)_{\RR^N}^s}|f|$ for $0\le t\le 1$ and $f\in L^2(\Omega)$.
Let $t\ge 1$ and write $t=n+\tau$ with $0\le \tau<1$ and $n\in\NN$. Then using the semigroup property we get that

\begin{align*}
|T(t)f|=&|T(n)T(\tau)f|=|(T(1))^nT(\tau)f|\\
\le& M^{n+1} e^{-nb(-\Delta)_{\RR^N}^s} e^{-\tau b(-\Delta)_{\RR^N}^s}|f|=M^{n+1} e^{-tb(-\Delta)_{\RR^N}^s}|f|\le M e^{\omega t} e^{-tb(-\Delta)_{\RR^N}^s}|f|,
\end{align*}
for every $f\in L^2(\Om)$, where $\omega=\ln(M)$ and we have assumed that $M\ge 1$ . The proof is finished.
\end{proof}

Recall that the  semigroup $(e^{-t(-\Delta)_{\RR^N}^s})_{t\ge 0}$ is submarkovian, hence it is contractive on $L^p(\RR^N)$ for $1\le p\le\infty$ (by Lemma \ref{lem31}). This implies that if \eqref{Levy} holds, then there exists $\omega\ge 0$ such that the semigroup $T$ satisfies $\|T(t)f\|_{L^p(\Om)}\le Me^{\omega t}\|f\|_{L^p(\Omega)}$ for every $f\in L^p(\Omega)\cap L^2(\Omega)$, $1\le p\le\infty$. By the Riesz-Thorin interpolation theorem, there exists $T_p(t)\in\mathcal L(L^p(\Omega))$ such that $T_p(t)f=T_2(t)f:=T(t)f$ for every $f\in L^p(\Omega)\cap L^2(\Omega)$, $1\le p\le\infty$. One can easily show that the semigroup $T_p$ is strongly continuous on $L^p(\Omega)$ if $1<p<\infty$.

%It is easy to see from \eqref{Levy} that $T(t)$ maps $L^2(\Omega)$ into $L^\infty(\Omega)$ for each $t>0$. This implies that $T(t)$ is given by a kernel $K(t,x,y)$. Thta is,
%
%The same is also true for $T(z)$ for $z\in\Sigma(\frac{\pi}{2})$. Denote by $T(z)$ the corresponding kernel of $T(z)$.

\begin{lemma}
Assume that $T$ has a fractional Gaussian estimate. Then the operator $T(t)$ is given by a kernel $K(t,\cdot,\cdot)\in L^\infty(\Omega\times\Omega)$. That is,

\begin{align}\label{in-ker}
(T(t)f)(x)=\int_{\Omega}K(t,x,y)f(y)\;dy\;\;\mbox{ for each }\; t>0,\;f\in L^2(\Omega)\;\mbox{ and }\; x\in\Omega.
\end{align}
The same is true for $T(z)$, $z\in\Sigma(\frac{\pi}{2})$. Denoting by $K(z,\cdot,\cdot)$ the corresponding kernel of $T(z)$, we have that
there is a constant $C>0$ such that

\begin{equation}\label{ker-com}
   \abs{ K(z,x,y)}\leq M(\operatorname{Re} (z))^{-\frac{N}{2s}}\left(1+\abs{x-y}\abs{bz}^{-1/2s}\right)^{-(N+2s)(1-\varepsilon)} 
\end{equation}
for all $x,y\in \Omega$ and $z\in\Sigma(\frac{\pi}{2})$. 
\end{lemma}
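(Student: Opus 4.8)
The plan is to combine the ultracontractivity estimate \eqref{ultra-estima} for the fractional Gaussian semigroup with the domination hypothesis \eqref{Levy} to get ultracontractivity for $T$, then feed this into Proposition \ref{propint} to produce the kernel, and finally use Proposition \ref{Extcomp} on a suitable space of homogeneous type to upgrade the on-diagonal kernel bound on the whole sector $\Sigma(\frac{\pi}{2})$ to the off-diagonal decay \eqref{ker-com}.

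First I would establish the kernel on the positive real axis. From \eqref{Levy} with all $t\ge 0$ and the estimate \eqref{ultra-estima}, for $0\le t$ and $f\in L^1(\Omega)\cap L^2(\Omega)$ one has $\|T(t)f\|_{L^\infty(\Omega)}\le M\|e^{-bt(-\Delta)_{\RR^N}^s}|f|\|_{L^\infty(\RR^N)}\le MC(bt)^{-\frac{N}{2s}}\|f\|_{L^1(\Omega)}$, so $T(t)\in\mathcal L(L^1(\Omega),L^\infty(\Omega))$ for every $t>0$. Since $T$ is self-adjoint it is a bounded holomorphic semigroup of angle $\frac{\pi}{2}$ (Remark \ref{rem1}), and on any compact $\mathbb K\subset\Sigma(\frac{\pi}{2})$ one can write $z=z_0+z_1$ with $z_0>0$ small and $z_1\in\Sigma(\frac{\pi}{2})$, so that $T(z)=T(z_0)T(z_1)$ maps $L^1\to L^2\to$ and using the adjoint factorization $T(z)=T(z_0/2)T(z-z_0)T(\overline{z_0/2})^*$ one gets $T(z)\in\mathcal L(L^1(\Omega),L^\infty(\Omega))$ with the sup over $\mathbb K$ finite. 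Proposition \ref{propint} then directly yields the kernel $K(z,\cdot,\cdot)\in L^\infty(\Omega\times\Omega)$ with the representation \eqref{in-ker}, holomorphic in $z$, for $z\in\Sigma(\frac{\pi}{2})$.

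Next I would derive the pointwise on-diagonal bound. Again from domination, $0\le T(t)$ is dominated by $M e^{-bt(-\Delta)_{\RR^N}^s}$, and since both are given by kernels, a domination argument (positivity of the dominating kernel minus the dominated one, in the appropriate a.e. sense) gives $|K(t,x,y)|\le MP_s(bt,x,y)$ for $t>0$ and a.e. $x,y$; combining with the upper bound in \eqref{KER} yields $|K(t,x,y)|\le C\, t^{-\frac{N}{2s}}(1+|x-y|(bt)^{-1/2s})^{-(N+2s)}$. To pass to complex $z$, I would apply Proposition \ref{Extcomp} with $E=\RR^N$ (or $\Omega$) equipped with the Euclidean metric and Lebesgue measure — a space of homogeneous type with $|B(x,r)|\simeq r^N$ — and $m=2s$, taking $\xi(\lambda)=(1+\lambda^{1/2s})^{-(N+2s)}$ (bounded, decreasing, up to a harmless constant). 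Hypothesis (1) of that proposition follows from the holomorphy plus the bound $\|T(z)\|_{\mathcal L(L^1,L^\infty)}\le C(\operatorname{Re} z)^{-N/2s}$ established above (for the on-diagonal bound one uses $|K(z,x,y)|\le\|T(z)\|_{\mathcal L(L^1,L^\infty)}$), and hypothesis (2) is exactly the real-axis estimate just obtained. The proposition then gives, for each $\varepsilon\in(0,1]$ and suitable $\theta$, the bound $|K(z,x,y)|\le C(\operatorname{Re} z)^{-N/2s}\xi(|x-y|^{2s}|z|^{-1})^{1-\varepsilon}$ on $\Sigma(\theta)$, which upon rewriting $\xi(|x-y|^{2s}|z|^{-1})=(1+|x-y||z|^{-1/2s})^{-(N+2s)}$ and absorbing constants is precisely \eqref{ker-com}. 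A covering/limiting argument over $\theta\uparrow\frac{\pi}{2}$ (with $\varepsilon$ allowed to depend on the sector) extends the bound to all of $\Sigma(\frac{\pi}{2})$.

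The main obstacle I anticipate is the passage from the real-variable estimates to the complex sector: verifying cleanly that $T(z)$ is bounded $L^1\to L^\infty$ uniformly on compacta (the factorization through the adjoint must be handled with care since $T$ is self-adjoint on $L^2$ but we need mapping properties between different $L^p$ spaces), and checking that the hypotheses of Proposition \ref{Extcomp} are met with the right choice of $m$, $\xi$, and the identification of the Euclidean space as a space of homogeneous type with $|B(x,r)|\simeq r^N$ so that $|B(x,(\operatorname{Re} z)^{1/m})|^{-1}\simeq(\operatorname{Re} z)^{-N/2s}$. The domination-of-kernels step (going from the semigroup domination \eqref{Levy} to the pointwise kernel inequality $|K(t,x,y)|\le MP_s(bt,x,y)$) also requires a small argument, but it is standard once both operators are known to be integral operators with bounded kernels.
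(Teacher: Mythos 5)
Your proposal is correct and follows essentially the same route as the paper: domination \eqref{Levy} plus ultracontractivity \eqref{ultra-estima} to get $L^1$--$L^\infty$ bounds, Proposition \ref{propint} for the kernel, the pointwise real-time bound $0\le K(t,x,y)\le MP_s(bt,x,y)$ from \eqref{Levy} and \eqref{KER}, and Proposition \ref{Extcomp} with $m=2s$ and $\xi(\lambda)=(1+\lambda^{1/2s})^{-(N+2s)}$ to reach the sector, finishing by exhausting $\Sigma(\frac{\pi}{2})$ with subsectors. Your sandwich factorization of $T(z)$ through real-time factors is exactly the paper's decomposition $T(z)=T\bigl(\tfrac{(1-\delta)t}{2}\bigr)T(\delta t+is)T\bigl(\tfrac{(1-\delta)t}{2}\bigr)$, using the $L^1\to L^2$ and $L^2\to L^\infty$ bounds (of order $t^{-N/4s}$) for the outer factors and the $L^2$-holomorphy for the middle one, so the quantitative bound $\|T(z)\|_{\mathcal L(L^1(\Omega),L^\infty(\Omega))}\le C(\operatorname{Re} z)^{-\frac{N}{2s}}$ needed in hypothesis (1) of Proposition \ref{Extcomp} is obtained just as you anticipate.
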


\begin{proof}
Firstly, it follows from \eqref{Levy} and \eqref{ultra-estima} that  $T(t)$ maps $L^2(\Omega)$ into $L^\infty(\Omega)$ and there is a constant $C>0$ such that

\begin{equation}\label{inq0}
\norm{T(t)}_{\mathcal{L}(L^2(\Omega),L^\infty(\Omega))}\leq Ct^{-\frac{N}{4s}},\hspace{0.5cm}t>0.
\end{equation}
By duality, we have that there is a constant $C>0$ such that

\begin{equation}\label{inq2}
\norm{T(t)}_{\mathcal{L}(L^1(\Omega),L^2(\Omega))}\leq Ct^{-\frac{N}{4s}},\hspace{0.5cm}t>0.
\end{equation}
It follows from \eqref{inq0} and \eqref{inq2} that $T(t)$ maps $L^1(\Omega)$ into $L^\infty(\Omega)$ and there is a constant $C>0$ such that

\begin{equation}\label{inq00}
\norm{T(t)}_{\mathcal{L}(L^1(\Omega),L^\infty(\Omega))}\leq Ct^{-\frac{N}{2s}},\hspace{0.5cm}t>0.
\end{equation}
By Proposition \ref{propint} this shows that $T(t)$ is given by a kernel $K(t,\cdot,\cdot)\in L^\infty(\Omega\times\Omega)$ satisfying \eqref{in-ker}.

Secondly, since the  semigroup $T$ is bounded holomorphic of angle $\frac{\pi}{2}$ on $L^2(\Omega)$, it follows that for every $0<\theta<\frac{\pi}{2}$ there exist $C\geq0$ and $w\in\R$ such that 
\begin{align*}
\norm{T(z)}_{\mathcal{L}(L^2(\Omega))}\leq Me^{\abs{z}w},\hspace{0.5cm}z\in\Sigma(\theta).
\end{align*}
Let $0<\theta_2<\theta_3<\frac{\pi}{2}$. Replacing $(T(t))_{t\geq0}$ by $(e^{-w t}T(t))_{t\geq0}$ if necessary, we may assume that 

\begin{equation}\label{inq1}
\norm{T(z)}_{\mathcal{L}(L^2(\Omega))}\leq C,\hspace{0.5cm}z\in\Sigma(\theta_3).
\end{equation}
Next, let $\delta\in (0,1)$ be such that  $\delta t+is\in\Sigma(\theta_2)$ whenever $\delta t+is\in\Sigma(\theta_3)$. Let $z=t+is\in\Sigma(\theta_2)$. Then 

\begin{align*}
z=t+is=\frac{(1-\delta)t}{2}+(\delta t+is) +\frac{(1-\delta)t}{2}.
\end{align*}
Using (\ref{inq1}), \eqref{inq0}, (\ref{inq2}) and the semigroup property, we can deduce that there is a constant $C>0$ such that
\begin{align*}
&\norm{T(z)}_{\mathcal{L}(L^1(\Om),L^{\infty}(\Omega))}\\
&\leq\norm{T\left(\frac{(1-\delta)t}{2}\right)}_{\mathcal{L}(L^1(\Om),L^{2}(\Om))}\norm{T(\delta t+is)}_{\mathcal{L}(L^{2}(\Om))}\norm{T\left(\frac{(1-\delta)t}{2}\right)}_{\mathcal{L}(L^2(\Om),L^{\infty}(\Om))}\\
&\leq Ct^{-\frac{N}{2s}}=C\Big(\operatorname{Re}( z)\Big)^{-\frac{N}{2s}}.
\end{align*}
It follows from Proposition \ref{propint} that the kernel $K:\Sigma(\theta_2)\times\Omega\times\Omega\rightarrow\C$ is holomorphic, in addition $K(z,\cdot,\cdot)\in L^{\infty}(\Omega\times\Omega)$ and

\begin{align}\label{in-rep}
\Big(T(z)f\Big)(x)=\displaystyle\int_{\Omega} K(z,x,y)f(y)dy\;\mbox{  for all }\; f\in L^p(\Omega)\cap L^2(\Omega).
\end{align}
By the Dunford-Pettis criterion, we have that there exists a constant $C>0$ such that

\begin{equation}
    \abs{K(z,x,y)}\leq C\Big(\operatorname{Re}( z)\Big)^{-\frac{N}{2s}}\;\;\;(z\in\Sigma(\theta_2)).
\end{equation}

It follows from \eqref{Levy} that there is a constant $C>0$ such that
\begin{equation}
    0\le K(t,x,y)\leq MP_s(bt,x,y) 
\leq C t^{-\frac{N}{2s}}\left(1+\abs{x-y}(bt)^{-1/2s}\right)^{-(N+2s)} \;\;\mbox{ for all }\;\;x,y\in\Omega.
\end{equation}
We have shown that the conditions of Proposition \ref{Extcomp} are satisfied. Therefore, for every $\varepsilon\in(0,1]$ and $\theta_1\in(0,\varepsilon\theta_2)$, there is a constant $C>0$ such that 

\begin{equation}\label{estma}
   \abs{ K(z,x,y)}\leq C\Big(\operatorname{Re} (z)\Big)^{-\frac{N}{2s}}\left(1+\abs{x-y}\abs{bz}^{-1/2s}\right)^{-(N+2s)(1-\varepsilon)} 
\end{equation}
for  every  $x,y\in \Omega$ and  $z\in\Sigma(\theta_1)$. Since $0<\theta_1<\frac{\pi}{2}$ was arbitrary, we have that \eqref{estma} remains true for every $z\in \Sigma(\frac{\pi}{2})$. We have shown \eqref{ker-com} and the proof is finished.
\end{proof}

 \begin{proof}[\bf Proof of Theorem \ref{Theo-L11}]
 Recall that it follows from Lemma \ref{lem31} that there exists $\omega\ge 0$ such that the semigroup $(e^{-\omega t}T(t))_{t\ge 0}$ has a fractional Gaussian estimate. Therefore, without loss of generality, we can assume that $\omega=0$, that is, $T$ has a fractional Gaussian estimate.
We prove the result in several steps.\\
 
{\bf Step 1}.  
Let $0<\varepsilon<1$, $0<\theta<\frac{\pi}{2}$ and $0<\theta_1<\varepsilon\theta$ be fixed. Using  \eqref{estma} and \eqref{in-rep}, we get that there is a constant $C>0$ such that 
 
\begin{align}\label{EST-A}
\abs{\left(T(z)f\right)(x)}&\leq C\Big (\operatorname{Re} (z)\Big)^{-\frac{N}{2s}}\int_{\Omega}\left(1+\abs{x-y}\abs{bz}^{-1/2s}\right)^{-(N+2s)(1-\varepsilon)}\abs{f(y)}\;dy\notag\\
&\leq  C\Big(\operatorname{Re} (z)\Big)^{-\frac{N}{2s}}\int_{\R^N}\left(1+\abs{x-y}\abs{bz}^{-1/2s}\right)^{-(N+2s)(1-\varepsilon)}\abs{\widetilde{f}(y)}\;dy,
\end{align}
where $\widetilde{f}$ is the extension of $f$ by zero on $\RR^N\setminus\Omega$.
Using a change of variable and  Young's convolution inequality we get from \eqref{EST-A} that
\begin{align}\label{Int-es}
\norm{T(z)f}_{L^p(\Omega)}&\leq C\Big(\operatorname{Re} (z)\Big)^{-\frac{N}{2s}}\norm{f}_{L^p(\Omega)}\int_{\R^N}\left(1+\abs{x}\abs{z}^{-1/2s}\right)^{-(N+2s)(1-\varepsilon)}dx\notag\\
&\le C\left(\frac{\abs{z}}{\operatorname{Re} (z)}\right)^{\frac{N}{2s}}\norm{f}_{L^p(\Omega)}\int_{\R^N}\left(1+\abs{x}\right)^{-(N+2s)(1-\varepsilon)}\;dx.
\end{align}

Taking $0<\varepsilon<\frac{2s}{N+2s}<1$, we get that
\begin{align}\label{Int-f}
\int_{\R^N}\left(1+\abs{x}\right)^{-(N+2s)(1-\varepsilon)}\;dx<\infty.
\end{align}
If follows from \eqref{Int-es} and \eqref{Int-f} that there is a constant $C>0$ such that

\begin{equation}
  \norm{T(z)}_{\mathcal{L}(L^p(\Omega))}\leq C \left(\frac{1}{\cos \theta_1}\right)^{\frac{N}{2s}}
  \label{bound1}
\end{equation}
for all $z\in\Sigma(\theta_1)$ and $1\leq p<\infty$.

We have shown that $T_p(z)\in\mathcal{L}(L^p (\Omega))$ and $T_p(z)f=T(z)f$ for every  $f\in L^p(\Omega)\cap L^2(\Omega)$, $z\in\Sigma(\theta_1)$ and $1\leq p<\infty$.   Since $L^2(\Omega)\cap L^p(\Omega)$ is dense in $L^p(\Omega)$, it follows that $T_p(z_1+z_2)=T_p(z_1)T_p(z_2)$ for all $z_1,z_2\in\Sigma(\theta_1)$. In addition we have that the norm $\norm{T_p(z)}_{\mathcal{L}(L^p(\Omega))}$ is bounded in $\Sigma(\theta_1)$.\\

{\bf Step 2}. We claim  that $T_p:\Sigma(\theta_1)\rightarrow \mathcal{L}(L^p(\Omega))$ is holomorphic.  Recall from \cite[Appendix A]{ABHN} that this  is equivalent to weak holomorphy. In other words, we have to show that the mapping $z\mapsto  \langle T_p(z)f,g\rangle$ is holomorphic for each $f\in L^p(\Omega)$ and $g\in L^q(\Omega)$ with $\frac 1p+\frac 1q=1$, where $\langle\cdot,\cdot\rangle$ denotes the duality pairing between $L^p(\Omega)$ and $L^q(\Omega)$. We prove the case $p=1$ (the case $p\neq 1$ being similar). Indeed
let $f\in L^1(\Omega)$ and $g\in L^{\infty}(\Omega)$. Let $\{f_n\}_{n\in \NN}\subset L^1(\Omega)\cap L^2(\Omega)$ be a sequence such that $f_n\rightarrow f$ in $L^1(\Omega)$ as $n\to\infty$.  Let  $g_n:=\chi_{\Omega_n}g$ where $\{\Omega_n\}_{n\in\NN}$ is a sequence of bounded open sets satisfying $\bigcup_{n=1}^\infty\Omega_n=\Omega$.

Since $f_n, g_n\in L^2(\Omega)$ for each $n\geq 1$, we have that $\langle T_1(z)f_n,g_n\rangle=( T(z) f_n,g_n)_{L^2(\Omega)}$.
Hence,  $\langle T_1(z)f_n,g_n\rangle$ is holomorphic. Using (\ref{bound1})we get that there is a constant $C>0$ such that

\begin{align*}
\abs{\langle T_1(z)f_n,g_n\rangle}\le \|T_1(z)f_n\|_{L^1(\Om)}\|g_n\|_{L^\infty(\Om)}\leq C \left(\frac{1}{\cos \theta_1}\right)^{\frac{N}{2s}}\norm{f_n}_{L^1(\Omega)}\norm{g_n}_{L^{\infty}(\Omega)}.
\end{align*}

Notice that $\|g_n\|_{L^\infty(\Omega)}\le \|g\|_{L^\infty(\Omega)}$ and since the sequence $\{f_n\}_{n\in\NN}$ converges, then it is bounded.
Therefore, there is a constant $C>0$ such that for all $n\geq 1$ and $z\in\Sigma(\theta_1)$ we have

\begin{equation}
\abs{\langle T_1(z)f_n,g_n\rangle}\leq C \left(\frac{1}{\cos \theta_1}\right)^{\frac{N}{2s}}\norm{g}_{L^{\infty}(\Omega)}.
\label{bound2}
\end{equation}
In addition we have that $\langle T_1(z)f_n,g_n\rangle\rightarrow \langle T_1(z)f,g\rangle$ as $n\rightarrow\infty$. It follows from \eqref{bound2} and Vitali's theorem (see \cite[Theorem A.5]{ABHN}) that $\langle T_1(z)f,g\rangle$ is holomorphic in $\Sigma(\theta_1)$ and the claim is proved. \\

{\bf Step 3}. We claim that $T_1(z)$ is strongly continuous.
That is, $T_1(z)f\rightarrow f$ as $z\rightarrow 0$, $z\in\Sigma(\theta_1)$ for each $f\in L^1(\Omega)$. It suffices to prove the claim for $f\in L^1(\Omega)\cap L^2(\Omega)$. Let $f\in L^1(\Omega)\cap L^2(\Omega)$.
Then $T_1(t)f=T(t)f$ for $t\geq0$ and $T(t)f\rightarrow f$ in $L^2(\Omega)$ as $t\downarrow 0$. 
Therefore,  there exists a sequence $t_n>0$ such that  $T_1(t_n)f(x)\rightarrow f(x)$ for a.e. $x\in\Omega$ as $t_n\downarrow 0$. Using  \eqref{Levy} 
and the Lebesgue Dominated Convergence Theorem, we can deduce that $T_1(t_n)f\rightarrow f$ in $L^1(\Omega)$ as $t_n\downarrow 0$. Since the mapping $z\mapsto T_1(z)$ is holomorphic, it follows that  $ T_1(z)f\rightarrow f$ in $L^1(\Omega)$ as $z\rightarrow0$, $z\in\Sigma(\theta_1)$ and we have proved the claim.\\

{\bf Step 4}. We have shown that the semigroup $T_1$ is bounded holomorphic of angle $\theta_1$. Since  $0<\theta_1< \frac{\pi}{2}$ was arbitrary, this implies that the semigroup is bounded holomorphic of angle $\frac{\pi}{2}$.  The proof is finished.
\end{proof}

%\subsection{Proof of Corollary \ref{Theo-C} } 

The proof of Corollary \ref{Theo-C} follows the lines of the case $s=1$ (the classical Gaussian estimate) contained in \cite[Corollary 2.5]{Ouhabaz}. For the sake of completeness we include the full proof.

\begin{proof}[\bf Proof of Corollary \ref{Theo-C}]
We give the proof for the case $C_0(\Om)$ (the case of  $C(\bOm)$ being similar).
As in the proof of Thoerem \ref{Theo-L11}, without any restriction we may assume that $T$ has a fractional Gaussian estimate for all $t\ge 0$. Denote by $A_1$ and $A_0$ the generators of the semigroups $T_1$ and $T_0$, respectively, and let $A_\infty=A_1^\star$ on $L^\infty(\Omega)$.

Firstly, we show that $\Sigma(\pi)\subset\rho(A_0)$. Since $T_1$ is holomorphic of angle $\frac{\pi}{2}$, then $\Sigma({\pi})\subset\rho(A_1)=\rho(A_\infty)$.  Let $\lambda\in\partial\sigma(A_0)$ (the boundary of the spectrum $\sigma(A_0)$). Then $\lambda$ is in the approximate point spectrum of $A_0$. This means that there is a sequence $\{f_n\}_{n\in\NN}\subset D(A_0)$ such that $\|f_n\|_{C_0(\Omega)}=1$ and $(\lambda-A_0)f_n\to 0$ in $C_0(\Omega)$ as $n\to\infty$ (see e.g. \cite[Page 64]{Nagel-al}). On the other hand, it is readily seen that $A_\infty$ is an extension of $A_0$ and thus $\lambda\in \partial\sigma(A_\infty)=\sigma(A_\infty)\subset (-\infty,0]$. We have shown that $\sigma(A_0)=\partial\sigma(A_0)\subset \sigma(A_\infty)$ and this implies that $\Sigma(\pi)\subset\rho(A_\infty)\subset\rho(A_0)$.

Secondly, since $(\lambda-A_0)^{-1}=(\lambda-A_\infty)^{-1}$ on $C_0(\Omega)$ for every  $\lambda>0$, we have that the equality also holds for every $\lambda\in \Sigma(\pi)$ by analytic continuation.  Then, by Remark \ref{rem2}, there is a constant $C>0$ such that

\begin{align*}
\|(\lambda-A_0)^{-1}\|_{\mathcal L(C_0(\Omega))}\le \|(\lambda-A_\infty)^{-1}\|_{\mathcal L(L^\infty(\Omega))}=\|(\lambda-A_1)^{-1}\|_{\mathcal L(L^1(\Omega))}\le \frac{C}{|\lambda|}
\end{align*}
for every $\lambda\in \Sigma(\pi)$. By Remark \ref{rem2} again, the preceding estimate implies that $T_0$ is bounded holomorphic of angle $\frac{\pi}{2}$ on $C_0(\Omega)$. The proof is finished.
\end{proof}

\bibliographystyle{plain}
\bibliography{biblio}

\end{document}